\documentclass[12pt,a4paper]{article}
\usepackage{latexsym,amssymb,amsfonts,amsmath,amsthm,nccmath,float}
\usepackage[hidelinks]{hyperref}
\usepackage[margin=2cm]{geometry}

\newtheorem{theorem}{Theorem}

\newtheorem{lemma}[theorem]{Lemma}
\newtheorem*{repCov}{Theorem \ref{NeatCoveringTheorem}}
\newtheorem*{repPac}{Theorem \ref{NeatPackingTheorem}}

\theoremstyle{definition}

\def \deg {{\rm deg}}
\def \wt {{\rm wt}}
\def \rank {{\rm rank}}

\def \diag {{\rm diag}}

\def \leq {\leqslant}
\def \geq {\geqslant}

\def \mod#1{{\:({\rm mod}\ #1)}}

\let\oldproofname=\proofname
\renewcommand{\proofname}{\rm\bf{\oldproofname}}

\title{\bf Generalising Fisher's inequality to coverings and packings}

\author{
Daniel Horsley\\
School of Mathematical Sciences \\
Monash University \\
Vic 3800, Australia \\[0.1cm]
\texttt{danhorsley@gmail.com}
}

\date{}

\begin{document}
\sloppy
\maketitle
\def\baselinestretch{1.2}\small\normalsize

\begin{abstract}
In 1940 Fisher famously showed that if there exists a non-trivial $(v,k,\lambda)$-design then $\lambda(v-1) \geq k(k-1)$. Subsequently Bose gave an elegant alternative proof of Fisher's
result. Here, we show that the idea behind Bose's proof can be generalised to obtain new bounds on the number of blocks in $(v,k,\lambda)$-coverings and -packings with $\lambda(v-1)<k(k-1)$.
\end{abstract}

\section{Introduction}

Let $v$, $k$ and $\lambda$ be positive integers and let $(V,\mathcal{B})$ be a pair where $V$ is a $v$-set of \emph{points} and $\mathcal{B}$ is a collection of $k$-subsets of $V$, called
\emph{blocks}. If each pair of points occur together in at least $\lambda$ blocks, then
$(V,\mathcal{B})$ is a \emph{$(v,k,\lambda)$-covering}. If each pair of points occur together in at most $\lambda$ blocks, then $(V,\mathcal{B})$ is a \emph{$(v,k,\lambda)$-packing}. If each pair of
points occur together in exactly $\lambda$ blocks, then $(V,\mathcal{B})$ is a
\emph{$(v,k,\lambda)$-design}. We refer to parameter sets $(v,k,\lambda)$ that do not satisfy $3 \leq k < v$, and designs with such parameter sets, as \emph{trivial}.

Usually we are interested in finding coverings with as few blocks as possible and packings with as
many blocks as possible. The \emph{covering number} $C_{\lambda}(v,k)$ is the minimum number of
blocks in any $(v,k,\lambda)$-covering and the \emph{packing number} $D_{\lambda}(v,k)$ is the
maximum number of blocks in any $(v,k,\lambda)$-packing. When $\lambda=1$ we omit the subscripts. For a given $k$ and $\lambda$, it is obvious that $D_{\lambda}(v,k) \leq D_{\lambda}(v',k)$ and $C_{\lambda}(v,k) \leq C_{\lambda}(v',k)$ when $v \leq v'$. The classical bound for covering numbers is the \emph{Sch{\"o}nheim bound} \cite{Sch} which states that
$$C_{\lambda}(v,k) \geq
    \left\lceil \mfrac{vr}{k} \right\rceil
    \quad \mbox{where} \quad r=\left\lceil\mfrac{\lambda(v-1)}{k-1}\right\rceil.$$
The classical bound for
packing numbers is the \emph{Johnson bound} \cite{Joh} which states that
$$D_{\lambda}(v,k) \leq \left\lfloor \mfrac{vr}{k} \right\rfloor
    \quad \mbox{where} \quad r=\left\lfloor\mfrac{\lambda(v-1)}{k-1}\right\rfloor.$$
These bounds are easily proved by observing that each point in a $(v,k,\lambda)$-covering occurs in at least $\lceil \frac{\lambda(v-1)}{k-1}\rceil$ blocks and each point in a $(v,k,\lambda)$-packing appears in at most $\lfloor\frac{\lambda(v-1)}{k-1}\rfloor$ blocks. A simple proof allows each of
these bounds to be improved by 1 in the case where $\lambda(v-1) \equiv 0 \mod{k-1}$ and $\lambda
v(v-1) \equiv 1 \mod{k}$ (see \cite{MilMul}, for example). Keevash's recent breakthrough result \cite[Theorem 6.5]{Kee} implies that, for a fixed $k$ and $\lambda$, $C_{\lambda}(v,k)$ and $D_{\lambda}(v,k)$ equal the improved Sch{\"o}nheim and Johnson bounds for all sufficiently large $v$. This represents the culmination of a large amount of work on the asymptotic behaviour of covering and packing numbers (see, for example, \cite{CarYus97,CarYus98,CheColLinWil,ErdHan,ErdRen,Rod}).

For packings with $\lambda=1$ we also have the \emph{second Johnson bound} \cite{Joh} which states that $D(v,k)(D(v,k)-1) \geq x(x-1)v+2xy$ where $x$ and $y$ are the integers such that $D(v,k)=xv+y$ and $0 \leq y <v$. This implies the slightly weaker statement that
$$D(v,k) \leq \left\lfloor\mfrac{v(k-1)}{k^2-v}\right\rfloor.$$

A number of results have been proved which improve on the Sch{\"o}nheim bound in various cases in which $k$ is a significant fraction of $v$ \cite{BluGreHei,BosCon,BryBucHorMaeSch,Fur,TodLB,TodFP}. Exact covering and packing numbers are known for $k \in \{3,4\}$. Also, exact covering numbers have been determined when $\lambda=1$ and $v \leq \frac{13}{4}k$ \cite{GreLiVan,Mil}. For surveys on coverings and packings see \cite{GorSti,MilMul,StiWeiYin}. Gordon maintains a repository for small coverings \cite{Gor}.

One of the most fundamental results in the study of block designs is \emph{Fisher's inequality} \cite{Fis} which states that any non-trivial $(v,k,\lambda)$-design has at
least $v$ blocks (or, equivalently that if there exists a non-trivial $(v,k,\lambda)$-design, then $\lambda(v-1) \geq k(k-1)$). Designs with exactly $v$ blocks (equivalently, those with $\lambda(v-1) = k(k-1)$) are called \emph{symmetric designs}. Many families of symmetric designs are known to exist, the most famous example being projective planes.

In \cite{Bos}, Bose gave an elegant alternative proof of Fisher's inequality. In this paper we show that the idea behind Bose's proof can be generalised to obtain new bounds on covering and packing numbers for parameter sets with $\lambda(v-1) < k(k-1)$. The most easily stated of our results are as follows.

\begin{theorem}\label{NeatCoveringTheorem}
Let $v$, $k$ and $\lambda$ be positive integers such that $3 \leq k <v$, and let $r$ and $d$ be the integers such that $\lambda(v-1)=r(k-1)-d$ and $0 \leq d < k-1$. If $d < r-\lambda$, then
$$C_{\lambda}(v,k) \geq \left\lceil\mfrac{v(r+1)}{k+1}\right\rceil.$$
\end{theorem}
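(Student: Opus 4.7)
The plan is to first rewrite the bound in a more workable form. Since $b$ is integral and $bk = \sum_{x \in V} r_x$ (where $r_x$ is the number of blocks through $x$), the inequality $b \geq \lceil v(r+1)/(k+1) \rceil$ is equivalent to $b + \sum_x f_x \geq v$, where $f_x := r_x - r \geq 0$ is the excess replication at $x$ (nonnegative by the argument underlying the Sch\"onheim bound). My aim is to prove this reformulation by a rank argument in the spirit of Bose.

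The main step is to form the $v \times (b + \sum_x f_x)$ augmented incidence matrix $N^*$ whose columns are the incidence vectors of the blocks of $\mathcal{B}$ together with, for each $x \in V$, exactly $f_x$ copies of the standard basis vector $e_x$. If $N^*$ has row rank $v$, the reformulated bound is immediate. Otherwise some nonzero $w \in \mathbb{R}^v$ satisfies $w^T N^* = 0$; the appended columns force $w$ to vanish on $\{x : f_x > 0\}$, so $w$ is supported on $S := \{x : r_x = r\}$, while the block columns give $N^T w = 0$ and hence $w^T NN^T w = 0$. Writing $\lambda_{xy} = \lambda + e_{xy}$ with $e_{xy} := \lambda_{xy} - \lambda \geq 0$ and expanding, this identity becomes
$$(r-\lambda)\|w\|^2 + \lambda\bigl(\textstyle\sum_{x \in S} w_x\bigr)^2 + \sum_{\substack{x \neq y \\ x,y \in S}} e_{xy}\, w_x w_y = 0.$$

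To close the argument I would control the only possibly negative sum above by applying $|w_x w_y| \leq (w_x^2 + w_y^2)/2$ together with the identity $\sum_{y \neq x} e_{xy} = f_x(k-1) + d$ (from $r_x(k-1) = \sum_{y \neq x}\lambda_{xy}$), which on $S$ collapses to $\sum_{y \neq x} e_{xy} = d$. A standard double-counting then gives $\sum_{x \neq y;\, x,y \in S} e_{xy} w_x w_y \geq -d \|w\|^2$, so substitution produces $(r-\lambda-d)\|w\|^2 + \lambda(\sum_{x \in S} w_x)^2 \leq 0$. The hypothesis $d < r - \lambda$ makes the first coefficient strictly positive, and $w \neq 0$ makes $\|w\|^2 > 0$, so the left-hand side is strictly positive: contradiction.

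I expect the main obstacle to be choosing the right augmentation. Appending $f_x$ copies of $e_x$ is suggested by the reformulation, but what has to be checked is that the AM-GM slack generated by this choice is exactly $d\|w\|^2$; this is precisely why the argument can handle the regime $d < r - \lambda$ but no more. The delicate ingredient is that $\sum_{y \neq x} e_{xy}$ simplifies to the Sch\"onheim deficit $d$ precisely on the points where $f_x = 0$, i.e.\ on the support of $w$ —- any looser bookkeeping on the excesses $e_{xy}$ would weaken the hypothesis past the stated threshold.
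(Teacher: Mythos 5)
Your proposal is correct and takes essentially the same route as the paper: both proofs restrict attention to the points of minimum replication $r$ (your set $S$ is the paper's $V_0$), use the Gram matrix $NN^T=M^*(\mathcal{D})$ of the incidence matrix, and exploit $d<r-\lambda$ as a diagonal-dominance condition to show no nonzero vector supported on those points lies in its kernel, then convert $b\geq v-(bk-rv)$ into the stated bound. Your column augmentation and explicit AM--GM estimate are just a self-contained repackaging of the paper's Lemma \ref{MainLemma} (which invokes Taussky's diagonal-dominance theorem together with the positive semidefiniteness of $\lambda J$) applied with $S=V_0$, followed by the count $b\geq|V_0|\geq v-a$.
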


\begin{theorem}\label{NeatPackingTheorem}
Let $v$, $k$ and $\lambda$ be positive integers such that $3 \leq k <v$, and let $r$ and $d$ be the integers such that $\lambda(v-1)=r(k-1)+d$ and $0 \leq d < k-1$. If $d < r-\lambda$, then
$$D_{\lambda}(v,k) \leq \left\lfloor\mfrac{v(r-1)}{k-1}\right\rfloor.$$
\end{theorem}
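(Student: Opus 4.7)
The plan is to adapt Bose's linear-algebraic proof of Fisher's inequality, applying it not to the full incidence matrix but to the submatrix indexed by the \emph{saturated} points. Let $(V,\mathcal{B})$ be a $(v,k,\lambda)$-packing with $b$ blocks and $v\times b$ incidence matrix $N$. Writing $\lambda_{ij}$ for the number of blocks containing $i,j$ and $r_i$ for the replication of $i$, a standard double count gives $r_i(k-1) = \sum_{j\neq i}\lambda_{ij} \leq (v-1)\lambda = r(k-1)+d < (r+1)(k-1)$, so each $r_i\leq r$. Call $i$ \emph{saturated} if $r_i=r$, and let $S\subseteq V$ be the set of saturated points. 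If I can establish that $|S|\leq b$, then at least $v-b$ points satisfy $r_i \leq r-1$, so $bk = \sum_i r_i \leq vr-(v-b)$, which rearranges to $b(k-1)\leq v(r-1)$ and gives the theorem.

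To prove $|S|\leq b$ I would show that the rows of the submatrix $N_S$ of $N$ with rows indexed by $S$ are linearly independent, equivalently that the Gram matrix $M_S := N_S N_S^T$ is non-singular. Its entries are $(M_S)_{ii}=r$ and $(M_S)_{ij}=\lambda_{ij}\leq\lambda$ for distinct $i,j\in S$, so I would write
\[M_S = (r-\lambda)I + \lambda J - E,\]
where $E$ is the symmetric $|S|\times|S|$ matrix with $E_{ii}=0$ and $E_{ij}=\lambda-\lambda_{ij}\geq 0$ for $i\neq j$. The critical estimate is a row-sum bound on $E$: for any $j\in S$, saturation gives $\sum_{i\in V\setminus\{j\}}(\lambda-\lambda_{ij}) = (v-1)\lambda - r(k-1) = d$, and restricting this sum to $i\in S$ can only decrease it, so every row of $E$ sums to at most $d$.

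Given a null vector $M_S y=0$ with $s:=\sum_{i\in S}y_i$, the identity $y^T M_S y=0$ becomes
\[(r-\lambda)\|y\|^2 + \lambda s^2 = y^T E y \leq d\|y\|^2,\]
where the inequality applies $2y_iy_j\leq y_i^2+y_j^2$ together with the row-sum bound on $E$. The hypothesis $d<r-\lambda$ then yields $(r-\lambda-d)\|y\|^2 + \lambda s^2 \leq 0$, and since both summands are non-negative this forces $y=0$, so $M_S$ is non-singular. The chief conceptual step is seeing that one must restrict Bose's argument to the saturated submatrix (the full $NN^T$ lacks useful structure in the packing setting) and identifying $d$ as the correct Gershgorin-type perturbation parameter; the hypothesis $d<r-\lambda$ is then precisely what the perturbation argument demands.
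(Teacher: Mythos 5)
Your proposal is correct and follows essentially the same route as the paper: your set $S$ of saturated points is exactly the paper's $V_0$, your decomposition $M_S=(r-\lambda)I+\lambda J-E$ with the row-sum bound $d$ on $E$ is the paper's diagonal-dominance condition in Lemma \ref{MainLemma} (with all weights $c_u=1$), and your final replication count is equivalent to the paper's $|V_0|\geq v-a$ step. The only cosmetic difference is that you verify positive definiteness directly via the quadratic form and $2y_iy_j\leq y_i^2+y_j^2$ rather than invoking the Levy--Desplanques/Taussky determinant criterion plus Sylvester.
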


When the hypotheses of these theorems are satisfied, the bounds
they give are at least as good as the Sch{\"o}nheim bound and the first Johnson bound if $r<k$ and never improve on them otherwise. It can be seen that each of these theorems implies Fisher's inequality by observing that, if there exists a $(v,k,\lambda)$-design, then $C_{\lambda}(v,k)=D_{\lambda}(v,k)=\frac{vr}{k}$ and $r=\frac{\lambda(v-1)}{k-1}$. Theorem
\ref{NeatCoveringTheorem} also subsumes various results from \cite{BosCon} and
\cite{BryBucHorMaeSch}. In the discussion following its proof we show that, when $k$ is large in comparison with $i$ and $\lambda$, the bound of Theorem \ref{NeatCoveringTheorem} exceeds the Sch{\"o}nheim bound by $i$ or more for almost half of the possible parameter sets for which $r<k$. In contrast, previous results yield improvements for only an insignificant fraction of the possible parameter sets for which $r<k$.

Theorem \ref{NeatPackingTheorem} and the other theorems concerning packings in this paper are only of interest for $\lambda \geq 2$, because they are
invariably weaker than the second Johnson bound in the case $\lambda=1$. Because of this, and in
order to avoid repetition, we concentrate on the case of coverings when discussing our results.

In Section \ref{prelimSec} we introduce the notation and preliminary results that we require, and
in Section \ref{basicSec} we prove and discuss Theorems \ref{NeatCoveringTheorem} and \ref{NeatPackingTheorem}. In Section \ref{indepSec} we prove some results concerning $m$-independent sets in edge-weighted graphs, and then in Sections \ref{dBigSec} and \ref{dSmallSec} we use these to prove extensions of and improvements on Theorems \ref{NeatCoveringTheorem} and \ref{NeatPackingTheorem}.


\section{Notation and preliminary results}\label{prelimSec}


For a positive integer $v$, let $[v]$ denote the set $\{1,\ldots,v\}$. Let $J_{i}$ denote the $i \times i$ all-ones matrix. Let $G$ be a multigraph. All multigraphs in this paper are loopless.  For distinct $u,w \in V(G)$, we denote by $\mu_G(uw)$ the multiplicity of the edge $uw$. For $S \subseteq V(G)$, we denote by $G[S]$ the sub-multigraph of $G$ induced by $S$. The \emph{adjacency matrix} $A(G)$ of a multigraph $G$ with vertex set $[v]$ is the $v \times v$ matrix whose $uw$ entry is $\mu_G(uw)$ if $u \neq w$ and $0$ if $u=w$.

Let $\mathcal{D}$ be a $(v,k,\lambda)$-covering or -packing on point set $[v]$. For $u \in [v]$,
define $r_{\mathcal{D}}(u)$ to be the number of blocks of $\mathcal{D}$ containing $u$. Define a
multigraph $G$ on vertex set $[v]$ with $\mu_G(uw)=|r_{\mathcal{D}}(uw)-\lambda|$ for all distinct $u,w \in [v]$, where $r_{\mathcal{D}}(uw)$ is the number of blocks of $\mathcal{D}$ containing both $u$ and $w$. If $\mathcal{D}$ is a $(v,k,\lambda)$-covering then $G$ is called the \emph{excess} of $\mathcal{D}$, and if $\mathcal{D}$ is a $(v,k,\lambda)$-packing then $G$ is called the \emph{leave} of $\mathcal{D}$. Let $R$ be the diagonal matrix $\diag(r_{\mathcal{D}}(1)-\lambda,r_{\mathcal{D}}(2)-\lambda,\ldots,r_{\mathcal{D}}(v)-\lambda)$ and define $M(\mathcal{D})=R+A(G)$ if $\mathcal{D}$ is a $(v,k,\lambda)$-covering and $M(\mathcal{D})=R-A(G)$ if $\mathcal{D}$ is a $(v,k,\lambda)$-packing. Define $M^*(\mathcal{D})=M(\mathcal{D})+\lambda J_{v \times v}$.

We begin with the following observation which is a simple extension of the argument given in
the note \cite{Bos}.

\begin{lemma}\label{LIRowsLowerBound}
If $\mathcal{D}$ is a $(v,k,\lambda)$-covering or -packing on point set $[v]$, then
$\mathcal{D}$ has at least $\rank(M^*(\mathcal{D}))$ blocks.
\end{lemma}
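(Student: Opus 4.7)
The plan is to mimic Bose's argument directly: express $M^*(\mathcal{D})$ as $NN^T$ for a suitable $\{0,1\}$-matrix $N$ whose columns are indexed by the blocks of $\mathcal{D}$, and then appeal to the standard fact that $\rank(NN^T)\le\rank(N)\le b$, where $b=|\mathcal{B}|$ is the number of blocks.

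Concretely, let $b=|\mathcal{B}|$ and let $N$ be the $v\times b$ point–block incidence matrix of $\mathcal{D}$, so $N_{uB}=1$ if $u\in B$ and $0$ otherwise. I first compute the entries of $NN^T$. For $u\in[v]$, the diagonal entry $(NN^T)_{uu}$ equals the number of blocks containing $u$, namely $r_{\mathcal{D}}(u)$; for distinct $u,w\in[v]$, the off-diagonal entry $(NN^T)_{uw}$ equals the number of blocks containing both $u$ and $w$, namely $r_{\mathcal{D}}(uw)$.

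Next I check that $NN^T=M^*(\mathcal{D})$. Recall that $M^*(\mathcal{D})=M(\mathcal{D})+\lambda J_{v\times v}$, and that $M(\mathcal{D})=R\pm A(G)$ with the $+$ sign for coverings and the $-$ sign for packings. The diagonal of $M^*(\mathcal{D})$ is therefore $(r_{\mathcal{D}}(u)-\lambda)+\lambda=r_{\mathcal{D}}(u)$, matching the diagonal of $NN^T$. For the off-diagonal $uw$-entry, in the covering case $\mu_G(uw)=r_{\mathcal{D}}(uw)-\lambda$ so the entry of $M^*(\mathcal{D})$ is $\mu_G(uw)+\lambda=r_{\mathcal{D}}(uw)$, while in the packing case $\mu_G(uw)=\lambda-r_{\mathcal{D}}(uw)$ so the entry of $M^*(\mathcal{D})$ is $-\mu_G(uw)+\lambda=r_{\mathcal{D}}(uw)$. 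Either way the off-diagonal entries agree with those of $NN^T$, so $M^*(\mathcal{D})=NN^T$.

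Finally, working over $\mathbb{R}$, the identity $\ker(N^T)=\ker(NN^T)$ (which follows from $x^T NN^T x=\|N^T x\|^2$) gives $\rank(NN^T)=\rank(N)$, and $N$ has only $b$ columns, so $\rank(N)\le b$. Combining,
$$
b\;\ge\;\rank(N)\;=\;\rank(NN^T)\;=\;\rank(M^*(\mathcal{D})),
$$
which is the required bound. There is no real obstacle here beyond carefully matching signs when identifying $M^*(\mathcal{D})$ with $NN^T$ in the two separate cases; once that identification is in hand the result is immediate from the rank inequality.
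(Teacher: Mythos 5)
Your proof is correct and follows essentially the same route as the paper: identify $M^*(\mathcal{D})$ with $NN^T$ for the point--block incidence matrix $N$ and conclude via $b\ge\rank(N)\ge\rank(NN^T)$. The only (harmless) difference is that you verify the entry-by-entry identification and the real rank identity $\rank(NN^T)=\rank(N)$ in more detail than the paper, which simply asserts $XX^T=M^*(\mathcal{D})$ and uses the inequality $\rank(X)\ge\rank(XX^T)$.
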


\begin{proof}
Let $b$ be the number of blocks of  $\mathcal{D}$. Index the blocks of $\mathcal{D}$ with the elements of $[b]$ and let $X=(x_{uy})$ be the $v \times b$ matrix such that $x_{uy}=1$ if point $u$ is in block $y$ and $x_{uy}=0$ otherwise ($X$ is known as the incidence matrix of $\mathcal{D}$). It is not difficult to see that $XX^T=M^*(\mathcal{D})$. Thus, we have
$$b \geq \rank(X) \geq \rank(XX^T) = \rank(M^*(\mathcal{D})).$$\qedhere
\end{proof}

We now have a bound on the number blocks in a covering or packing $\mathcal{D}$ in terms of the rank of $M^*(\mathcal{D})$. In order to bound the rank of $M^*(\mathcal{D})$, we shall employ Lemma \ref{DominanceCor}. Lemma \ref{DominanceCor} is an easy consequence of the following well-known generalisation the Levy-Desplanques theorem (see \cite[Theorem IV]{Tau}, for example).

\newpage
\begin{lemma}[\cite{Tau}]\label{LDCor}
If $B = (b_{uw})$ is a $t \times t$ matrix with real entries such that, for each $u \in [t]$,
$$\medop\sum_{w \in [t] \setminus \{u\}} |b_{uw}| < b_{uu},$$
then $\det(B)>0$.
\end{lemma}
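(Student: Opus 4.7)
The plan is to establish Lemma \ref{LDCor} in two stages: first show that strict diagonal dominance forces $B$ to be non-singular (this is the Levy--Desplanques theorem proper), and then use a continuous deformation from $B$ to the identity to pin down the sign of $\det(B)$.

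For non-singularity, I would argue by contradiction. Suppose $Bx=0$ for some nonzero $x \in \mathbb{R}^t$, and let $u \in [t]$ satisfy $|x_u|=\max_w |x_w|$, so $|x_u|>0$. Reading the $u$-th row of $Bx=0$ gives $b_{uu}x_u = -\sum_{w\neq u} b_{uw} x_w$, and taking absolute values together with $|x_w| \leq |x_u|$ yields
$$b_{uu}|x_u| \leq \medop\sum_{w \neq u} |b_{uw}||x_w| \leq |x_u|\medop\sum_{w \neq u}|b_{uw}|.$$
Dividing through by $|x_u|>0$ contradicts strict diagonal dominance (note that the hypothesis also forces $b_{uu}>0$, since the left-hand side of the assumed inequality is non-negative). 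Hence $B$ is invertible and $\det(B) \neq 0$.

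To promote $\det(B) \neq 0$ to $\det(B)>0$, I would interpolate between $B$ and $I$ by setting $B_s := sB+(1-s)I$ for $s \in [0,1]$. The diagonal entries of $B_s$ are $s\,b_{uu}+(1-s)$, which are strictly positive since $b_{uu}>0$ and $s \in [0,1]$, while its off-diagonal entries are $s\,b_{uw}$. For $s=0$ strict dominance is trivial, and for $s \in (0,1]$
$$\medop\sum_{w \neq u} |s\,b_{uw}| = s\medop\sum_{w \neq u} |b_{uw}| < s\,b_{uu} \leq s\,b_{uu}+(1-s),$$
so $B_s$ satisfies the hypothesis of the first stage and $\det(B_s) \neq 0$ throughout $[0,1]$. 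Since $s \mapsto \det(B_s)$ is a continuous real-valued function that never vanishes and equals $1$ at $s=0$, it must remain positive at $s=1$, giving $\det(B)>0$.

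The argument is entirely classical and I do not anticipate a hard step; the only point that requires any care is recognising that diagonal dominance immediately gives only non-vanishing of the determinant, so the homotopy-plus-continuity step (or an equivalent device such as induction on $t$ together with a Schur-complement reduction) is unavoidable for fixing the sign.
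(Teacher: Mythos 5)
Your argument is correct and complete. Note, however, that the paper does not actually prove this lemma: it is quoted directly from Taussky's paper \cite{Tau} (her Theorem IV) as a known strengthening of the Levy--Desplanques theorem, so there is no in-paper proof to compare against. Your two-stage argument --- the standard maximal-coordinate contradiction to get non-singularity of every strictly diagonally dominant matrix, followed by the homotopy $B_s=sB+(1-s)I$ and continuity of the determinant to fix the sign --- is one of the classical routes and is carried out correctly here; in particular you are right to observe that the hypothesis forces $b_{uu}>0$, which is what makes each $B_s$ again strictly diagonally dominant and keeps $\det(B_s)$ nonzero along the whole path. The other standard route (closer to Taussky's own) is via Gershgorin discs: the hypothesis places every eigenvalue of $B$ in the open right half-plane, and since non-real eigenvalues occur in conjugate pairs with positive product while real eigenvalues are positive, the determinant, being the product of the eigenvalues, is positive. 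The Gershgorin argument is slightly shorter but needs complex eigenvalue bookkeeping; your deformation argument stays entirely over the reals and reuses the non-singularity step, which is arguably cleaner. Either way, nothing is missing.
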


\begin{lemma}\label{DominanceCor}
If $A = (a_{uw})$ is a symmetric $s \times s$ matrix with real entries and there exist positive real numbers
$c_1,\ldots,c_s$ such that, for each $u \in [s]$,
$$\medop\sum_{w \in [s] \setminus \{u\}} c_w|a_{uw}| < c_ua_{uu},$$
then $A$ is positive definite.
\end{lemma}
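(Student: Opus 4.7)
The plan is to deduce this lemma from Lemma \ref{LDCor} combined with Sylvester's criterion, so that positive definiteness of $A$ follows once I show that every leading principal minor is positive.

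First I would reduce the weighted diagonal-dominance hypothesis to the unweighted one of Lemma \ref{LDCor} by right-multiplying $A$ by the diagonal matrix $C = \diag(c_1,\ldots,c_s)$. The entries of $B = AC$ are $b_{uw} = c_w a_{uw}$, so the hypothesis of the present lemma translates exactly into $\sum_{w \in [s] \setminus \{u\}} |b_{uw}| < b_{uu}$ for each $u$, the strict diagonal-dominance condition of Lemma \ref{LDCor}. Applying that lemma gives $\det(B) > 0$, and since $\det(B) = \det(A)\prod_{w} c_w$ with each $c_w > 0$, this yields $\det(A) > 0$.

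Next I would observe that the same argument applies verbatim to any principal submatrix: if $S \subseteq [s]$ and $A[S]$ denotes the submatrix indexed by $S$, then for every $u \in S$,
$$\medop\sum_{w \in S \setminus \{u\}} c_w |a_{uw}| \leq \medop\sum_{w \in [s] \setminus \{u\}} c_w |a_{uw}| < c_u a_{uu},$$
so the hypotheses of the lemma are inherited by $A[S]$ with the restricted weights $\{c_w : w \in S\}$. By the previous paragraph, $\det(A[S]) > 0$. Taking $S = [k]$ for each $k \in [s]$ shows that every leading principal minor of $A$ is positive, and since $A$ is symmetric, Sylvester's criterion then yields that $A$ is positive definite.

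There is no real obstacle here beyond packaging these two standard ingredients together; the only point that requires a moment's care is the passage from ``$\det(A)>0$'' to ``$A$ is positive definite'', which is why I would emphasise that the hypothesis is preserved under restriction to principal submatrices. Alternatively one could avoid Sylvester's criterion altogether by applying the same argument to $A - \epsilon I$ for small $\epsilon > 0$ to conclude that no eigenvalue of $A$ lies in $(-\infty, \epsilon]$, but the principal-minor route seems cleaner.
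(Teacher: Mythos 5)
Your proposal is correct and follows essentially the same route as the paper: scale the columns of each leading principal submatrix by the weights $c_u$, apply Lemma \ref{LDCor} to the resulting strictly diagonally dominant matrix to get a positive determinant, and conclude via Sylvester's criterion. Your explicit remark that the weighted dominance hypothesis is inherited by principal submatrices is a point the paper glosses over (its displayed sums are indexed by $[s]$ where $[t]$ is meant), so your write-up is if anything slightly more careful.
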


\begin{proof}
By Sylvester's criterion it suffices to show that each leading principal minor of $A$ has positive determinant. Let $t \leq s$ be a positive integer and let $A_t$ be the $t$th leading principal minor of $A$. We show that $\det(A_t) > 0$. Let $B=(b_{uw})$ be the matrix obtained from $A_t$ by multiplying column $u$ by $c_u$ for
each $u \in [t]$ and note that $\det(B)=c_1\cdots c_s\det(A_t)$. Using our hypotheses, for each $u \in [t]$, we have
$$\medop\sum_{w \in [s] \setminus \{u\}} |b_{uw}| = \medop\sum_{w \in [s] \setminus \{u\}} c_w|a_{uw}| < c_ua_{uu} = b_{uu}.$$
So it follows from Lemma \ref{LDCor} that $\det(B)>0$ and hence that $\det(A_t)>0$.
\end{proof}

Note that the hypotheses of Lemma \ref{DominanceCor} can be weakened. In fact we only need to require
strict inequality for one row in each irreducible component of the matrix (see \cite{Tau} for
details). In certain specific cases this strengthening can be useful. To give a small example, it
can be used to show there does not exist a $(12,4,1)$-packing with nine blocks whose leave is a 12-cycle (if such a packing $\mathcal{D}$ existed then the matrix obtained from $M(\mathcal{D})$ by deleting a row would be positive definite and we could use an argument similar to the proof of Lemma \ref{MainLemma} below to show that $\mathcal{D}$ had at least eleven blocks). We will not require the stronger version for our purposes here, however.

\section{Basic bounds}\label{basicSec}

We introduce some more notation and note some basic facts about coverings and packings that we will use tacitly throughout the remainder of the paper.

Let $\mathcal{D}$ be a $(v,k,\lambda)$-covering or -packing on point set $[v]$ and let $G$ be the
excess or leave of $\mathcal{D}$. Define $b=b(\mathcal{D})$ to be the number of blocks of
$\mathcal{D}$. If $\mathcal{D}$ is a $(v,k,\lambda)$-covering, define $r=r(\mathcal{D})$
and $d=d(\mathcal{D})$ to be the integers such that $\lambda(v-1)=r(k-1)-d$ and $0 \leq d < k-1$,
and define $a=a(\mathcal{D})=bk-rv$. If $\mathcal{D}$ is a $(v,k,\lambda)$-packing, define
$r=r(\mathcal{D})$ and $d=d(\mathcal{D})$ to be the integers such that $\lambda(v-1)=r(k-1)+d$ and
$0 \leq d < k-1$, and define $a=a(\mathcal{D})=rv-bk$. Define $V_i=V_i(\mathcal{D})=\{u \in [v]:\deg_G(u)=d+i(k-1)\}$ for each nonnegative integer $i$. The following hold.
\begin{itemize}
    \item
For each nonnegative integer $i$ and each $u \in V_i$, $r_{\mathcal{D}}(u)=r+i$ if $\mathcal{D}$ is a $(v,k,\lambda)$-covering and $r_{\mathcal{D}}(u)=r-i$ if $\mathcal{D}$ is a $(v,k,\lambda)$-packing.
    \item
$\{V_0,V_1,\ldots\}$ is a partition of $[v]$.
    \item
$\sum_{u \in [v]}\deg_{G}(u)= dv+a(k-1)$.
    \item
$|[v] \setminus V_0| \leq a$ and $|V_0| \geq v-a$.
\end{itemize}

All of the results in this paper are based on the following lemma. It employs Lemma
\ref{DominanceCor} to obtain a bound on the number of blocks in a covering or packing based on the
structure of its excess or leave.

\begin{lemma}\label{MainLemma}
Let $v$, $k$ and $\lambda$ be positive integers such that $3 \leq k < v$, let $\mathcal{D}$ be a $(v,k,\lambda)$-covering or -packing on point set $[v]$, and let $G$ be the excess or leave of
$\mathcal{D}$. If there is a subset $S$ of $[v]$ and positive real numbers $(c_u)_{u
\in S}$ such that, for each $u \in S$,
$$\medop \sum_{w \in S \setminus \{u\}} c_w\mu_{G[S]}(uw) < c_u(r_{\mathcal{D}}(u) - \lambda)$$
then $\mathcal{D}$ has at least $|S|$ blocks.
\end{lemma}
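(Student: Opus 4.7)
The plan is to chain together Lemmas \ref{LIRowsLowerBound} and \ref{DominanceCor} by examining the principal submatrix of $M^{*}(\mathcal{D})$ indexed by $S$. By Lemma \ref{LIRowsLowerBound}, it suffices to show that $\rank(M^{*}(\mathcal{D})) \geq |S|$, and this will follow once I show that $M^{*}(\mathcal{D})[S]$, the principal submatrix on the rows and columns indexed by $S$, is nonsingular.

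First I would decompose $M^{*}(\mathcal{D})[S] = M(\mathcal{D})[S] + \lambda J_{|S|\times|S|}$, which is immediate from the definition $M^{*}(\mathcal{D})=M(\mathcal{D})+\lambda J_{v\times v}$. The matrix $A := M(\mathcal{D})[S]$ is symmetric with diagonal entries $a_{uu}=r_{\mathcal{D}}(u)-\lambda$ and off-diagonal entries $a_{uw}=\pm\mu_{G}(uw)=\pm\mu_{G[S]}(uw)$ for $u,w \in S$, where the sign is $+$ in the covering case and $-$ in the packing case; in either case $|a_{uw}|=\mu_{G[S]}(uw)$.

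Next I would apply Lemma \ref{DominanceCor} to $A$ with the given weights $(c_u)_{u\in S}$. The hypothesis of the present lemma is precisely the hypothesis of Lemma \ref{DominanceCor}:
$$\medop\sum_{w \in S \setminus \{u\}} c_w|a_{uw}| = \medop\sum_{w \in S \setminus \{u\}} c_w\mu_{G[S]}(uw) < c_u(r_{\mathcal{D}}(u)-\lambda) = c_u a_{uu}$$
for each $u \in S$ (noting in passing that the hypothesis forces $r_{\mathcal{D}}(u)>\lambda$ for each $u\in S$ since $c_u>0$, so the diagonal entries are indeed positive). Hence $M(\mathcal{D})[S]$ is positive definite.

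Finally, since $\lambda>0$ and $J_{|S|\times|S|}$ is positive semidefinite (it has rank one with nonnegative eigenvalues), the sum $M^{*}(\mathcal{D})[S] = M(\mathcal{D})[S] + \lambda J_{|S|\times|S|}$ is positive definite and therefore invertible. Consequently $\rank(M^{*}(\mathcal{D})) \geq \rank(M^{*}(\mathcal{D})[S]) = |S|$, and Lemma \ref{LIRowsLowerBound} then gives $b(\mathcal{D}) \geq |S|$. The argument is essentially an assembly of the preceding lemmas, so I do not foresee a genuine obstacle; the only point requiring care is making sure the signs in the packing case do not disrupt the diagonal-dominance test, which is handled by the absolute value in Lemma \ref{DominanceCor}.
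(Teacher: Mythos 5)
Your proposal is correct and follows essentially the same route as the paper: extract the principal submatrix of $M(\mathcal{D})$ indexed by $S$, apply Lemma \ref{DominanceCor} with the weights $(c_u)$ to get positive definiteness, add $\lambda J$ (positive semidefinite) to conclude the corresponding submatrix of $M^*(\mathcal{D})$ is positive definite and hence of full rank, and finish with Lemma \ref{LIRowsLowerBound}. No gaps.
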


\begin{proof}
Let $M(\mathcal{D}) = (m_{uw})$, let $s=|S|$, and let $A$ be the $s \times s$ submatrix of $M(\mathcal{D})$ containing only those rows and columns indexed by $S$. Note that $A$ is symmetric because $M(\mathcal{D})$ is symmetric by definition. By Lemma \ref{DominanceCor} $A$ is positive definite because, for each $u \in S$, we have
$$\medop\sum_{w \in S \setminus \{u\}} c_w|m_{uw}| = \medop \sum_{w \in S \setminus \{u\}} c_w\mu_{G[S]}(uw) < c_u(r_{\mathcal{D}}(u) - \lambda) = c_um_{uu}.$$
Thus the matrix $A+\lambda J_s$ is also positive definite, because the matrix $J_s$ is well known to be positive semi-definite. So $\rank(A+\lambda J_s)=s$ and, since $A+\lambda J_s$ is a submatrix of $M^*(\mathcal{D})$, $\rank(M^*(\mathcal{D})) \geq s$. The result now follows from Lemma \ref{LIRowsLowerBound}.
\end{proof}

In what follows we often choose $c_u=1$ for each $u \in S$ when applying Lemma
\ref{MainLemma}, and in these cases we will not make explicit mention of this choice when invoking the lemma. Note that a $(r'-\lambda)$-independent set $S'$ in $G$ where $r'\leq \min(\{r_\mathcal{D}(u):u \in S'\})$ is always a valid choice for $S$ ($m$-independence is defined in the next section).

It is now a relatively simple matter to prove Theorems \ref{NeatCoveringTheorem} and \ref{NeatPackingTheorem} which we restate here for convenience.

\begin{repCov}
Let $v$, $k$ and $\lambda$ be positive integers such that $3 \leq k <v$, and let $r$ and $d$ be the integers such that $\lambda(v-1)=r(k-1)-d$ and $0 \leq d < k-1$. If $d < r-\lambda$, then
$$C_{\lambda}(v,k) \geq \left\lceil\mfrac{v(r+1)}{k+1}\right\rceil.$$
\end{repCov}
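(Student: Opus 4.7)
My plan is to apply Lemma \ref{MainLemma} with the set $S = V_0(\mathcal{D})$ and the uniform weights $c_u = 1$, and then convert the resulting lower bound on $b$ into the stated fractional bound by a short arithmetic manipulation.

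First, I would fix a $(v,k,\lambda)$-covering $\mathcal{D}$ with excess $G$, and let $b$, $a$, $r$, $d$ be as defined in Section \ref{basicSec}. For every $u \in V_0$ we have $r_{\mathcal{D}}(u) - \lambda = r - \lambda$ and $\deg_G(u) = d$, so in particular
$$\medop\sum_{w \in V_0 \setminus \{u\}} \mu_{G[V_0]}(uw) \;\leq\; \deg_G(u) \;=\; d \;<\; r - \lambda \;=\; r_{\mathcal{D}}(u) - \lambda,$$
where the strict inequality is exactly the hypothesis $d < r - \lambda$. Lemma \ref{MainLemma} (with $c_u = 1$ for each $u \in V_0$) therefore gives $b \geq |V_0|$. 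Since the basic facts in Section \ref{basicSec} yield $|V_0| \geq v - a$, I obtain $b \geq v - a$.

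Finally, I would unwind the definition $a = bk - rv$ to rewrite $b \geq v - a$ as $b(k+1) \geq v(r+1)$, i.e.\ $b \geq \frac{v(r+1)}{k+1}$; integrality of $b$ then upgrades this to $b \geq \lceil v(r+1)/(k+1)\rceil$, which is the desired bound on $C_{\lambda}(v,k)$.

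There is no real obstacle here: the only place where the hypothesis $d < r - \lambda$ is used is to guarantee strict row-wise diagonal dominance on $V_0$, and the only non-trivial input is Lemma \ref{MainLemma} itself. The main interest of the statement is presumably in the sharper variants (Sections \ref{dBigSec} and \ref{dSmallSec}), where one must exploit vertices of positive excess-degree or use non-uniform weights $c_u$ to handle parameter ranges in which $V_0$ alone is too small; here the uniform choice on $V_0$ suffices precisely because $d < r - \lambda$.
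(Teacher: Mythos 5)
Your proposal is correct and follows exactly the same route as the paper: apply Lemma \ref{MainLemma} with $S=V_0$ and unit weights, use $|V_0|\geq v-a$, and then substitute $a=bk-rv$ to obtain $b(k+1)\geq v(r+1)$. No differences worth noting.
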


\begin{repPac}
Let $v$, $k$ and $\lambda$ be positive integers such that $3 \leq k <v$, and let $r$ and $d$ be the integers such that $\lambda(v-1)=r(k-1)+d$ and $0 \leq d < k-1$. If $d < r-\lambda$, then
$$D_{\lambda}(v,k) \leq \left\lfloor\mfrac{v(r-1)}{k-1}\right\rfloor.$$
\end{repPac}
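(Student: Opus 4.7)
The plan is to imitate the strategy I expect to work for Theorem \ref{NeatCoveringTheorem}, namely to apply Lemma \ref{MainLemma} with $S=V_0$ and $c_u=1$ for each $u \in S$. Let $\mathcal{D}$ be a $(v,k,\lambda)$-packing on $[v]$ with $b$ blocks and leave $G$, and let $r$, $d$, $a$, $V_0$ be as defined in the preamble to Lemma \ref{MainLemma}. For each $u \in V_0$ one has $r_{\mathcal{D}}(u) = r$ and $\deg_G(u) = d$, and the hypothesis $d < r - \lambda$ guarantees in particular that $r - \lambda$ is positive, so the weights $c_u$ are indeed chosen from the right side of an inequality with positive right-hand side.

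Next I verify the hypothesis of Lemma \ref{MainLemma} for this choice. For each $u \in V_0$,
\[
\medop\sum_{w \in V_0 \setminus \{u\}} \mu_{G[V_0]}(uw) \;=\; \deg_{G[V_0]}(u) \;\leq\; \deg_G(u) \;=\; d \;<\; r - \lambda \;=\; r_{\mathcal{D}}(u) - \lambda,
\]
so Lemma \ref{MainLemma} yields $b \geq |V_0|$. Combining this with the bullet point $|V_0| \geq v - a$ and the packing identity $a = rv - bk$ gives $b \geq v - (rv - bk)$, which rearranges to $b(k-1) \leq v(r-1)$. Since $b$ is an integer, this is equivalent to $b \leq \lfloor v(r-1)/(k-1) \rfloor$, as required.

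I do not anticipate any substantive obstacle: the hypothesis $d < r - \lambda$ has been tailored so that the vertices of $V_0$ automatically form an $(r-\lambda)$-independent set in $G$ (in the terminology flagged after Lemma \ref{MainLemma}), and the final arithmetic rearrangement is routine. The same argument simultaneously delivers Theorem \ref{NeatCoveringTheorem} after swapping the signs in the identity $a = bk - rv$ for coverings, which is consistent with the authors' remark that both results follow from essentially the same reasoning.
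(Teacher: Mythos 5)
Your proposal is correct and follows the paper's own proof essentially verbatim: apply Lemma \ref{MainLemma} with $S=V_0$ and unit weights, use $|V_0|\geq v-a$ and the identity $a=rv-bk$, and rearrange to $b(k-1)\leq v(r-1)$. No gaps.
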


\begin{proof}[{\bf Proof of Theorems \ref{NeatCoveringTheorem} and \ref{NeatPackingTheorem}}]
Suppose that $\mathcal{D}$ is a $(v,k,\lambda)$-covering or -packing and let $G$ be the excess or
leave of $\mathcal{D}$. Note that $r=r(\mathcal{D})$ and $d=d(\mathcal{D})$. Let $b=b(\mathcal{D})$, $a=a(\mathcal{D})$ and $V_0=V_0(\mathcal{D})$. For each $u \in V_0$ we have $$\medop \sum_{w \in V_0 \setminus \{u\}} \mu_{G[V_0]}(uw) \leq \deg_G(u) = d < r-\lambda = r_{\mathcal{D}}(u) - \lambda.$$
Thus we can apply Lemma \ref{MainLemma} with $S=V_0$ to establish that $b \geq |V_0|$. Recall that $|V_0| \geq v-a$, so $b \geq v-a$. Applying the definition of $a$ and solving the resulting inequality for $b$ produces the required result.
\end{proof}

We compare the bound given by Theorem \ref{NeatCoveringTheorem} to the Sch{\"o}nheim bound. For a positive integer $i$, the bound given by Theorem \ref{NeatCoveringTheorem} will exceed the Sch{\"o}nheim bound by at least $i$ whenever $d < r-\lambda$, $k \geq 4i\lambda+5$ and $2i\lambda+1 \leq r \leq k-2i\lambda$. To see that
this is the case, observe that
$$\mfrac{v(r+1)}{k+1}-\mfrac{rv}{k} = \mfrac{v(k-r)}{k(k+1)} > \mfrac{(r-1)(k-r)(k-1)}{\lambda k(k+1)} \geq \mfrac{i(k+3)(k-1)}{k(k+1)} = \mfrac{i(k^2+2k-3)}{k^2+k}$$
and that this last expression is at least $i$ for $k \geq 3$. The first inequality holds because $\lambda v > (r-1)(k-1)$ and the second holds because $(r-1)(k-r) \geq i\lambda(k+3)$ which follows from $2i\lambda+1 \leq r \leq k-2i\lambda$ and $k \geq 4i\lambda+5$  (note that the former implies $(r-1)(k-r) \geq 2i\lambda(k-2i\lambda-1)$ and the latter implies $k-2i\lambda-1 \geq \frac{k+3}{2}$).

For a fixed $r$ in the range $2i\lambda+1 \leq r \leq k-2i\lambda$, there are at least
$\lfloor\frac{r-\lambda}{\lambda}\rfloor \geq \frac{r-2\lambda+1}{\lambda}$ values of $v$ such that $d<r-\lambda$. From this, it can be seen that for a given $k$ and $\lambda$, there are at least
$$\sum_{r=2i\lambda+1}^{k-2i\lambda}\mfrac{r-2\lambda+1}{\lambda}=\mfrac{(k-4i\lambda)(k-4\lambda+3)}{2\lambda}$$
integer values of $v$ for which Theorem \ref{NeatCoveringTheorem} improves the Sch{\"o}nheim bound by at least $i$. So, when $k$ is large in comparison with $i$ and $\lambda$, we obtain an improvement of $i$ or more for almost half of the less than $\frac{k^2}{\lambda}$ possible parameter sets for which $r<k$.

One interesting special case of Theorem \ref{NeatCoveringTheorem} to consider is the case where $\lambda v(v-1)+dv \equiv 0 \mod{k(k-1)}$ and hence a $(v,k,\lambda)$-covering meeting the Sch{\"o}nheim bound would necessarily have the same number of blocks on each point. In this case we have that $\frac{vr}{k}=\frac{\lambda v(v-1)+dv}{k(k-1)}$ is an integer and so the bound of Theorem \ref{NeatCoveringTheorem} exceeds the Sch{\"o}nheim bound by at least
$$\left\lceil\mfrac{v(r+1)}{k+1}\right\rceil - \mfrac{vr}{k} = \left\lceil\mfrac{v(k-r)}{k(k+1)}\right\rceil.$$
In particular, the bound is strictly greater than the Sch{\"o}nheim bound whenever $r<k$. Setting $d=0$ gives Fisher's inequality, setting $d=1$ yields a result of Bose and Connor \cite{BosCon}, and setting $d=2$ yields a result of Bryant, Buchanan, Horsley, Maenhaut and Scharaschkin \cite{BryBucHorMaeSch}. Table \ref{NeatCoveringImprovementsTable} gives examples of parameter sets for which Theorem \ref{NeatCoveringTheorem} strictly improves on the Sch{\"o}nheim bound. (For all tables in this paper, the maximum value of $k$ considered is determined only by space considerations.)

\begin{table}[H]
\begin{small}
\begin{center}
\begin{tabular}{|c|p{15.4cm}|}
\hline
$k$ & $v$ \\ \hline
$5$ & $17$ \\
$6$ & $20$, $21$, $24$, $25$, $26$ \\
$7$ & $23$, $24$, $25$, $28$, $29$, $30$, $31$, $35$, $36$, $37$ \\
$8$ & $27$, $28_{2}$, $29_{2}$, $33$, $34$, $35_{2}$, $36$, $39$, $40_{2}$, $41$, $42$, $43$, $48$, $ 49$, $50$ \\
$9$ & $31_{2}$, $32$, $33_{2}$, $38$, $39_{2}$, $40$, $41_{2}$, $45_{2}$, $46_{2}$, $47$, $48_{2}$, $ 49_{2}$, $52$, $53$, $54_{2}$, $55$, $56$, $57$, $59$, $63$, $64$, $65$ \\
$10$ & $35_{2}$, $36_{2}$, $37_{2}$, $43_{2}$, $44_{2}$, $45_{2}$, $46_{3}$, $51_{2}$, $52_{2}$, $53_{2}$, $54_{2}$, $55_{2}$, $59$, $60_{2}$, $61_{2}$, $62_{2}$, $63$, $64_{2}$, $67$, $68$, $69$, $ 70_{2}$, $71_{2}$, $72$, $73$, $75$, $76$, $80$, $81$, $82$ \\
$11$ & $39_{2}$, $40_{2}$, $41_{3}$, $48_{2}$, $49_{2}$, $50_{2}$, $51_{2}$, $57_{2}$, $58_{2}$, $ 59_{2}$, $60_{2}$, $61_{2}$, $66_{2}$, $67_{2}$, $68_{2}$, $69_{2}$, $70_{2}$, $71_{2}$, $75_{2},$ $76$, $77_{2}$, $78_{2}$, $79_{2}$, $80$, $81_{2}$, $84$, $85$, $86$, $87$, $88_{2}$, $89_{2}$, $90$, $91$, $93$, $94$, $95$, $99$, $100$, $101$ \\
$12$ & $43_{2}$, $44_{2}$, $45_{3}$, $53_{2}$, $54_{2}$, $55_{3}$, $56_{2}$, $63_{2}$, $64_{3}$, $ 65_{2}$, $66_{3}$, $67_{3}$, $73_{2}$, $74_{2}$, $75_{3}$, $76_{2}$, $77_{3}$, $78_{2}$, $83_{2},$ $84_{3}$, $85_{2}$, $86_{2}$, $87_{3}$, $88_{2}$, $89_{2}$, $93_{2}$, $94_{2}$, $95_{2}$, $96_{2}$, $ 97_{2}$, $98_{2}$, $99_{2}$, $100_{2}$, $103_{2}$, $104$, $105$, $106$, $107$, $108_{2}$, $109_{2}$, $110_{2}$, $111$, $113$, $114$, $115$, $116$, $120$, $121$, $122$ \\
 \hline
\end{tabular}
\end{center}
\end{small}
\vspace{-0.6cm}

\caption{For $\lambda=1$ and each $k\in \{3,\ldots,12\}$, the values of $v > \frac{13}{4}k$ for which Theorem \ref{NeatCoveringTheorem} strictly improves on the Sch{\"o}nheim bound. Values of $v$ for which the Sch{\"o}nheim bound is improved by $i \geq 2$ are marked with a subscript $i$.}
\label{NeatCoveringImprovementsTable}
\end{table}

\section{$m$-independent sets}\label{indepSec}

An edge-weighted graph $G$ is a complete (simple) graph whose edges have been assigned nonnegative real weights. We represent the weight of an edge $uw$ in such a graph $G$ by $\wt_G(uw)$ and we define the weight of a vertex $u$ of $G$ as $\wt_G(u)=\sum_{w \in V(G) \setminus \{u\}}\wt_G(uw)$. For $S \subseteq V(G)$, we denote by $G[S]$ the edge-weighted subgraph of $G$ induced by $S$. If $m$ is a positive integer and $G$ is an edge-weighted graph, then a subset $S$ of $V(G)$ is said to be an \emph{$m$-independent set} in $G$ if $\wt_{G[S]}(u) < m$ for each $u \in S$. An algorithm for finding an $m$-independent set in an edge-weighted graph, which we shall call $m$-MAX, operates by beginning with the graph and iteratively deleting an (arbitrarily chosen) vertex of maximum weight in the remaining graph until all the vertex weights in the remaining graph are less than $m$. The vertices of this subgraph form an $m$-independent set in the original graph.

A multigraph can be represented as an edge-weighted graph whose edge and vertex weights correspond to the multiplicities of edges and degrees of vertices in the original multigraph. Viewing multigraphs in this way, we recover the usual definitions of an $m$-independent set and the algorithm $m$-MAX from the definitions in the preceding paragraph.

Caro and Tuza \cite{CarTuz} established a lower bound on the size of an $m$-independent set yielded by an application of $m$-MAX to a multigraph in terms of the degree sequence of the multigraph. Lemma \ref{CTVariant} below is an adaptation of this result to the setting of edge-weighted graphs. Its proof requires no new ideas and follows the proof given in \cite{CarTuz} closely. For a positive integer $m$, define a function $f_m: \{x \in \mathbb{R}:x\geq0\} \rightarrow \{x \in \mathbb{R}:0<x\leq1\}$ by
$$f_m(x)=\left\{
         \begin{array}{ll}
           1-\tfrac{x}{2m}, & \hbox{if $x \leq m$;} \\
           \tfrac{m+1}{2x+2}, & \hbox{if $x \geq m$.}
         \end{array}
       \right.$$
It can be seen that $f_m$ has the following properties.
\begin{itemize}
    \item[(F1)]
$f_m$ is continuous, convex, and monotonically decreasing.
    \item[(F2)]
$f_m(x-y)-f_m(x) \geq \frac{y(m+1)}{2x(x+1)}$ for any real numbers $x$ and $y$ with
$x \geq m$ and $1 \leq y \leq x$.
\end{itemize}
To see that (F2) holds, observe that from the definition of $f_m$ we have
$$f_m(x-y)-f_m(x)-\mfrac{y(m+1)}{2x(x+1)}=
    \left\{
    \begin{array}{ll}
        \mfrac{y(m+1)}{2(x+1)(x+1-y)}-\mfrac{y(m+1)}{2x(x+1)}, & \hbox{if $x-y \geq m$;} \\[0.3cm]
        \mfrac{(x+1)(x-m)(m+y-x)+m(y-1)(x-m)}{2mx(x+1)}, & \hbox{if $x-y \leq m$;}
    \end{array}
    \right.$$
and that this is nonnegative, using the facts that $x \geq m$ and $1 \leq y \leq x$.

\begin{lemma}\label{CTVariant}
Let $m$ be a positive integer and let $G$ be an edge-weighted graph in which any edge incident with two vertices of weight at least $m$ has weight at least $1$. Then any application of $m$-MAX to $G$ will yield an $m$-independent set in $G$ of size at least $\lceil\sum_{u \in V(G)} f_m(\wt_G(u))\rceil$.
\end{lemma}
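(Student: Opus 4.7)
The plan is to adapt the standard potential-function argument of Caro and Tuza, proving the stronger statement that the quantity $\Phi(H) := \sum_{u \in V(H)} f_m(\wt_H(u))$ is non-decreasing under each deletion step of $m$-MAX. Once this invariant is in hand, the conclusion is immediate: when the algorithm terminates on an induced subgraph $G[S]$ every remaining vertex has weight strictly less than $m$, so $f_m\leq 1$ on each surviving term and $|S|\geq\Phi(G[S])\geq\Phi(G)=\sum_{u\in V(G)}f_m(\wt_G(u))$; integrality of $|S|$ then gives the ceiling bound.

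To prove the invariant, let $v$ be a maximum-weight vertex deleted from the current graph $H$ in some step, and set $W:=\wt_H(v)$. Since the algorithm did not halt beforehand, $W\geq m$. It suffices to show
\[
\sum_{u \in V(H) \setminus \{v\}} \bigl[f_m(\wt_H(u) - \wt_H(uv)) - f_m(\wt_H(u))\bigr] \;\geq\; f_m(W),
\]
which is exactly $\Phi(H-v)\geq\Phi(H)$. Writing $x=\wt_H(u)$ and $y=\wt_H(uv)$ for each $u\neq v$, maximality of $W$ gives $x\leq W$, and the definition of vertex weight gives $y\leq x$. The strategy is to establish the uniform per-vertex bound
\[
f_m(x-y) - f_m(x) \;\geq\; \mfrac{y(m+1)}{2W(W+1)}
\]
and then sum over $u\neq v$; since $\sum_{u\neq v}\wt_H(uv)=\wt_H(v)=W$, the sum of the right-hand sides collapses to $\mfrac{(m+1)W}{2W(W+1)}=f_m(W)$, as required.

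The per-vertex inequality splits naturally according to where $x$ sits relative to $m$. If $x\geq m$, property (F2) applied to $x$ and $y$ gives $f_m(x-y)-f_m(x)\geq\mfrac{y(m+1)}{2x(x+1)}\geq\mfrac{y(m+1)}{2W(W+1)}$, once the hypothesis $y\geq 1$ of (F2) is verified; this hypothesis holds whenever $y>0$, because both $u$ and $v$ have weight at least $m$ in $H$ (hence also in $G$, since deletions only decrease vertex weights), and the edge-weight assumption on $G$ together with the invariance of edge weights under deletion then forces $y\geq 1$. If instead $x<m$ then $x-y<m$ as well, so $f_m$ is linear on $[x-y,x]$ and $f_m(x-y)-f_m(x)=\mfrac{y}{2m}$, which beats $\mfrac{y(m+1)}{2W(W+1)}$ because $W\geq m$ gives $\mfrac{m+1}{W(W+1)}\leq\mfrac{1}{m}$.

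I expect the main obstacle to be purely organisational: choosing the uniform lower bound so that both regimes of $f_m$ fit into a single inequality, and checking that the structural hypothesis on $G$ is inherited by every intermediate graph produced by $m$-MAX (which it is, trivially, since deletions can only decrease vertex weights while leaving edge weights untouched). Once this bookkeeping is done, the substance of the proof reduces to the clean identity $\sum_{u\neq v}\wt_H(uv)=W$, which is essentially the reason the coefficient $\mfrac{m+1}{2x(x+1)}$ was chosen in (F2) to begin with.
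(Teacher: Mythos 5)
Your proposal is correct and follows essentially the same argument as the paper: the paper phrases it as induction on $|V(G)|$ rather than as a monotone potential, but the core is identical — the per-vertex inequality $f_m(x-y)-f_m(x)\geq \frac{y(m+1)}{2W(W+1)}$ split into the cases $x<m$ (linearity of $f_m$) and $x\geq m$ (property (F2), with $y\geq 1$ supplied by the edge-weight hypothesis), summed via $\sum_{u\neq v}\wt_H(uv)=W$. No gaps.
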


\begin{proof}
Let $G$ be a fixed edge-weighted graph and let $F=\sum_{u \in V(G)} f_m(\wt_G(u))$. If $G$ has
only one vertex, then $F=f_m(0)=1$ and the result clearly holds. Suppose by induction that the result holds for all edge-weighted graphs with fewer vertices than $G$.

Let $w$ be an arbitrary vertex of maximum weight in $G$. We may suppose that $\wt_G(w) \geq m$, for otherwise $V(G)$ is $m$-independent in $G$ and, since $f_m(\wt_G(u)) \leq 1$ for each $u \in G$, we are finished immediately. Let $G'$ be the graph obtained from $G$ by deleting $w$ and all edges incident with $w$, and let $F'=\sum_{u \in V(G')} f_m(\wt_{G'}(u))$. If $F' \geq F$ then, applying our inductive hypothesis, we see that any application of $m$-MAX to $G'$ will yield an $m$-independent set of size at least $\lceil F' \rceil \geq \lceil F \rceil$. Thus, because $w$ was chosen arbitrarily, any application of $m$-MAX to $G$ will yield an $m$-independent set of size at least $\lceil F \rceil$. So it suffices to show that $F' \geq F$.

For nonnegative real numbers $x$ and $y$ with $y \leq x$, let $f^*_m(x,y)=f_m(x-y)-f_m(x)$. It can
be seen that
$$F'-F=\left(\medop \sum_{u\in V(G) \setminus \{w\}} f^*_m(\wt_G(u),\wt_G(uw))\right)-f_m(\wt_G(w)).$$
So, noting that $f_m(\wt_G(w))=\frac{m+1}{2\wt_G(w)+2}$ and that $\wt_G(w)=\sum_{u\in
V(G) \setminus \{w\}}\wt_G(uw)$, it in fact suffices to show that, for each $u \in V(G)$,
\begin{equation}\label{indepReduction}
f^*_m(\wt_G(u),\wt_G(uw)) \geq \left(\mfrac{\wt_G(uw)}{\wt_G(w)}\right)\left(\mfrac{m+1}{2\wt_G(w)+2}\right).
\end{equation}
If $u$ is a vertex of $G$ with $\wt_G(u)<m$, then using the definition of $f_m$ we have
$f^*_m(\wt_G(u),\wt_G(uw))=\frac{\wt_G(uw)}{2m}$ and hence \eqref{indepReduction} holds because
$\wt_G(w) \geq m$. If $u$ is a vertex of $G$ with $\wt_G(u) \geq m$, then $\wt_G(uw) \geq 1$ from our hypotheses and thus, using Property (F2) of $f_m$, we have $f^*_m(\wt_G(u),\wt_G(uw)) \geq
(\frac{\wt_G(uw)}{\wt_G(u)})(\frac{m+1}{2\wt_G(u)+2})$. So again \eqref{indepReduction} holds
because $\wt_G(w) \geq \wt_G(u)$.
\end{proof}

\begin{lemma}\label{CTVariantCor}
Let $m$ be a positive integer and let $G$ be an edge-weighted graph in which any edge incident with two vertices of weight at least $m$ has weight at least $1$. The following hold
\begin{itemize}
    \item[(a)]
For any nonempty subset $S$ of $V(G)$, any application of $m$-MAX to $G[S]$ will yield an $m$-independent set in $G[S]$ of size at least $\lceil|S|f_m(x)\rceil$ where $x=\frac{1}{|S|}\sum_{u \in S}\wt_G(u)$.
    \item[(b)]
For any two disjoint nonempty subsets $S_0$ and $S_1$ of $V(G)$, any application of $m$-MAX to $G[S_0 \cup S_1]$ will yield an $m$-independent set in $G[S_0 \cup S_1]$ of size at least
$\lceil|S_0|f_m(x_0)+|S_1|f_m(x_1)\rceil$ where $x_i=\frac{1}{|S_i|}\sum_{u \in S_i}\wt_G(u)$ for $i \in \{0,1\}$.
\end{itemize}
\end{lemma}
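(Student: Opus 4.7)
The plan is to deduce both parts from Lemma \ref{CTVariant} applied to the appropriate induced subgraph, using monotonicity of $f_m$ to pass from weights in the subgraph to weights in $G$, and then convexity of $f_m$ (via Jensen's inequality) to replace a sum of $f_m$-values by $|S|f_m(\bar{x})$ where $\bar{x}$ is the average weight.

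For part (a), I would first check that $G[S]$ still satisfies the hypothesis of Lemma \ref{CTVariant}, namely that every edge between two vertices of weight at least $m$ has weight at least $1$. Since $G[S]$ is induced, $\wt_{G[S]}(u) \leq \wt_G(u)$ for every $u \in S$, and $\wt_{G[S]}(uw)=\wt_G(uw)$ whenever $u,w \in S$; thus vertices of weight $\geq m$ in $G[S]$ also have weight $\geq m$ in $G$, and the hypothesis is inherited. Applying Lemma \ref{CTVariant} to $G[S]$ shows that $m$-MAX yields an $m$-independent set of size at least $\lceil\sum_{u \in S} f_m(\wt_{G[S]}(u))\rceil$. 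By (F1), $f_m$ is monotonically decreasing, so $f_m(\wt_{G[S]}(u)) \geq f_m(\wt_G(u))$ for each $u \in S$. Then by (F1), $f_m$ is convex, so Jensen's inequality gives
$$\mfrac{1}{|S|}\medop\sum_{u \in S} f_m(\wt_G(u)) \geq f_m\!\left(\mfrac{1}{|S|}\medop\sum_{u \in S} \wt_G(u)\right)=f_m(x),$$
and combining these two inequalities yields the required bound $\lceil|S|f_m(x)\rceil$.

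For part (b), the proof proceeds identically but with the inequalities applied to $S_0$ and $S_1$ separately. Apply Lemma \ref{CTVariant} to $G[S_0 \cup S_1]$, which again inherits the edge-weight hypothesis. Split the resulting sum as
$$\medop\sum_{u \in S_0 \cup S_1} f_m(\wt_{G[S_0 \cup S_1]}(u)) \geq \medop\sum_{u \in S_0} f_m(\wt_G(u))+\medop\sum_{u \in S_1} f_m(\wt_G(u)),$$
using monotonicity of $f_m$ together with $\wt_{G[S_0 \cup S_1]}(u) \leq \wt_G(u)$. Then apply Jensen separately on $S_0$ and on $S_1$ to get $\sum_{u \in S_i}f_m(\wt_G(u)) \geq |S_i|f_m(x_i)$ for $i\in\{0,1\}$, giving the desired bound $\lceil|S_0|f_m(x_0)+|S_1|f_m(x_1)\rceil$.

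The argument is essentially mechanical once one notices that (F1) packages both the required ingredients (monotonicity and convexity), so there is no serious obstacle; the only subtlety is the bookkeeping check that the induced subgraph still satisfies the lower-edge-weight hypothesis of Lemma \ref{CTVariant}, which is handled by the observation that weights can only decrease when restricting to an induced subgraph.
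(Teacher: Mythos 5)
Your proof is correct and follows essentially the same route as the paper's: apply Lemma \ref{CTVariant} to the induced subgraph, use the monotonicity of $f_m$ to pass from $\wt_{G[S]}(u)$ to $\wt_G(u)$, and then use convexity via Jensen's inequality (separately on $S_0$ and $S_1$ for part (b)). Your explicit check that the induced subgraph inherits the edge-weight hypothesis is a point the paper leaves implicit, but it is the same argument.
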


\begin{proof}
We will prove (a). The proof of (b) is similar. From (F1) we know that $f_m$ is convex and monotonically decreasing. Let $S$ be a subset of $V(G)$. By Lemma \ref{CTVariant}, any application of $m$-MAX to $G[S]$ will yield an $m$-independent set in $G[S]$ of size at least $\lceil F \rceil$ where $F=\sum_{u \in S} f_m(\wt_{G[S]}(u))$. For any $u \in S$ we have $f_m(\wt_{G[S]}(u)) \geq f_m(\wt_{G}(u))$ because $\wt_{G[S]}(u) \leq \wt_{G}(u)$ and $f_m$ is monotonically decreasing. Thus,
$$F \geq \medop\sum_{u \in S} f_m(\wt_{G}(u)) \geq |S|f_m\left(\mfrac{1}{|S|}\medop\sum_{u \in S}\wt_G(u)\right)$$
where the second inequality follows from the convexity of $f_m$.
\end{proof}

\section{Bounds for the case $d \geq r-\lambda$}\label{dBigSec}

We require some further definitions to state our subsequent bounds concisely. For positive integers $v$, $k$ and $\lambda$ such that $3 \leq k <v$ and nonnegative real numbers $\alpha$ and $\beta$ such that $\alpha \geq \beta$, we define
$$CB_{(v,k,\lambda)}(\alpha,\beta) = \mfrac{rv(\alpha-\beta)+\alpha v}{k(\alpha-\beta)+1}, \quad \mbox{where} \quad r=\left\lceil\mfrac{\lambda(v-1)}{k-1}\right\rceil;$$
and, if $\alpha>\beta+\frac{1}{k}$,
$$DB_{(v,k,\lambda)}(\alpha,\beta) = \mfrac{rv(\alpha-\beta)-\alpha v}{k(\alpha-\beta)-1}, \quad \mbox{where} \quad r=\left\lfloor\mfrac{\lambda(v-1)}{k-1}\right\rfloor.$$
Note that the bounds given by Theorems \ref{NeatCoveringTheorem} and \ref{NeatPackingTheorem} are $\lceil CB_{(v,k,\lambda)}(1,0) \rceil$ and $\lfloor DB_{(v,k,\lambda)}(1,0) \rfloor$ respectively.
The next two results are technical lemmas that allow us to establish that $C_{\lambda}(v,k) \geq \lceil CB_{(v,k,\lambda)}(\alpha,\beta) \rceil$ and $D_{\lambda}(v,k) \leq \lfloor DB_{(v,k,\lambda)}(\alpha,\beta) \rfloor$ for certain values of $\alpha$ and $\beta$.

\begin{lemma}\label{BasicCoveringCalcLemma}
Let $v$, $k$ and $\lambda$ be positive integers such that $3 \leq k <v$. Suppose that any $(v,k,\lambda)$-covering $\mathcal{D}$ has at least $\alpha|V_0(\mathcal{D})|+\beta|V_1(\mathcal{D})|$ blocks, where $\alpha$ and $\beta$ are nonnegative real numbers such that $\alpha \geq 2\beta$. Then $C_{\lambda}(v,k) \geq \lceil CB_{(v,k,\lambda)}(\alpha,\beta) \rceil$.
\end{lemma}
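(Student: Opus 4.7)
The plan is to take an arbitrary $(v,k,\lambda)$-covering $\mathcal{D}$ with $b$ blocks and, using the hypothesis $b \geq \alpha|V_0|+\beta|V_1|$ together with the basic parameter identities stated in Section \ref{basicSec}, derive $b \geq CB_{(v,k,\lambda)}(\alpha,\beta)$. Since $b$ is a nonnegative integer, the ceiling version follows immediately, yielding $C_{\lambda}(v,k) \geq \lceil CB_{(v,k,\lambda)}(\alpha,\beta)\rceil$.

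First, I would rewrite $\alpha|V_0|+\beta|V_1|$ using the fact that $\{V_0,V_1,\ldots\}$ partitions $[v]$. Writing $|V_0| = v - \sum_{i\geq 1}|V_i|$ gives
\[
\alpha|V_0|+\beta|V_1| = \alpha v - (\alpha-\beta)|V_1| - \alpha\sum_{i\geq 2}|V_i|.
\]
Next I would bound the subtracted quantity from above. Since $r_{\mathcal{D}}(u)=r+i$ for $u\in V_i$ and $\sum_u r_{\mathcal{D}}(u)=bk=rv+a$, we have $\sum_{i\geq 1} i\,|V_i| = a$. So the $|V_i|$ (for $i\geq 1$) are nonnegative reals with prescribed weighted sum $a$, and we want the maximum of $(\alpha-\beta)|V_1| + \alpha\sum_{i\geq 2}|V_i|$. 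The effective coefficient of $|V_i|$ per unit of the constraint is $\alpha-\beta$ for $i=1$ and $\alpha/i \leq \alpha/2$ for $i\geq 2$. The assumption $\alpha \geq 2\beta$ rearranges exactly to $\alpha-\beta \geq \alpha/2$, so placing all the mass on $|V_1|$ maximises this linear form, and the maximum equals $(\alpha-\beta)a$. Hence
\[
\alpha|V_0|+\beta|V_1| \geq \alpha v - (\alpha-\beta)a.
\]

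Combining with the hypothesis gives $b \geq \alpha v - (\alpha-\beta)a$, and substituting $a = bk - rv$ yields $b(1+k(\alpha-\beta)) \geq \alpha v + rv(\alpha-\beta)$. Since $\alpha \geq 2\beta \geq 0$ the coefficient of $b$ is positive, and dividing gives
\[
b \geq \frac{rv(\alpha-\beta)+\alpha v}{k(\alpha-\beta)+1} = CB_{(v,k,\lambda)}(\alpha,\beta),
\]
noting that the $r$ appearing here matches the one in the definition of $CB$ since $\lambda(v-1)=r(k-1)-d$ with $0 \leq d < k-1$ forces $r = \lceil\lambda(v-1)/(k-1)\rceil$. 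Taking ceilings concludes the argument.

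I do not expect a real obstacle here; the only subtlety is recognising that the hypothesis $\alpha \geq 2\beta$ is precisely what is needed so that putting all of the ``excess'' mass on $V_1$ is the worst case for the desired lower bound, which makes the bound $(\alpha-\beta)a$ available. Everything else is routine algebraic manipulation of the identities defining $a$, $b$, $r$ and $V_i$.
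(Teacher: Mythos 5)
Your proof is correct and follows essentially the same route as the paper: both arguments reduce to the inequality $b \geq \alpha v-(\alpha-\beta)a$ (you via the weighted-sum constraint $\sum_{i\geq 1}i|V_i|=a$ and an exchange argument, the paper via the equivalent degree-sum bound $v_1+2(v-v_0-v_1)\leq a$ together with $v_1\leq a$), with $\alpha\geq 2\beta$ used in exactly the same way, followed by the same substitution $a=bk-rv$ and solving for $b$. No gaps.
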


\begin{lemma}\label{BasicPackingCalcLemma}
Let $v$, $k$ and $\lambda$ be positive integers such that $3 \leq k <v$. Suppose that any $(v,k,\lambda)$-packing $\mathcal{D}$ has at least $\alpha|V_0(\mathcal{D})|+\beta|V_1(\mathcal{D})|$ blocks, where $\alpha$ and $\beta$ are nonnegative real numbers such that $\alpha \geq 2\beta$ and $\alpha >\beta+\frac{1}{k}$. Then $D_{\lambda}(v,k) \leq \lfloor DB_{(v,k,\lambda)}(\alpha,\beta) \rfloor$.
\end{lemma}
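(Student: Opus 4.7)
The plan is to fix an arbitrary $(v,k,\lambda)$-packing $\mathcal{D}$, set $b = b(\mathcal{D})$, $a = a(\mathcal{D}) = rv-bk$, and $V_i = V_i(\mathcal{D})$. The hypothesis supplies a lower bound $b \geq \alpha|V_0|+\beta|V_1|$, so my aim is to bound the right-hand side below by a linear expression in $a$ and $v$ alone, then eliminate $a$ via $a=rv-bk$ and solve the resulting linear inequality for $b$.

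To that end, I would invoke the two structural identities stated just before Lemma \ref{MainLemma}. The partition identity gives $\sum_{i\geq 0}|V_i| = v$, and combining $\sum_u \deg_G(u) = dv+a(k-1)$ with the fact that $\deg_G(u) = d+i(k-1)$ for $u \in V_i$ yields $\sum_{i\geq 0} i|V_i| = a$. Using both identities to substitute for $|V_0|$ and $|V_1|$ produces the exact equality
\[
\alpha|V_0|+\beta|V_1| \;=\; \alpha v - (\alpha-\beta)a + \sum_{i\geq 2}\bigl(i(\alpha-\beta)-\alpha\bigr)|V_i|.
\]
For each $i\geq 2$ the coefficient $i(\alpha-\beta)-\alpha \geq 2(\alpha-\beta)-\alpha = \alpha-2\beta$ is nonnegative by the hypothesis $\alpha \geq 2\beta$, so the tail sum is nonnegative and $\alpha|V_0|+\beta|V_1| \geq \alpha v - (\alpha-\beta)a$.

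Combining this with the hypothesis $b \geq \alpha|V_0|+\beta|V_1|$ and the substitution $a = rv-bk$ gives $b \geq \alpha v - (\alpha-\beta)(rv-bk)$, which rearranges to
\[
b\bigl(1-k(\alpha-\beta)\bigr) \;\geq\; v\bigl(\alpha - r(\alpha-\beta)\bigr).
\]
The assumption $\alpha > \beta+\tfrac{1}{k}$ forces $k(\alpha-\beta)>1$, so the bracket on the left is strictly negative; dividing by it flips the inequality and yields $b \leq \frac{rv(\alpha-\beta)-\alpha v}{k(\alpha-\beta)-1} = DB_{(v,k,\lambda)}(\alpha,\beta)$. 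Since $b$ is an integer and $\mathcal{D}$ was arbitrary, $D_{\lambda}(v,k) \leq \lfloor DB_{(v,k,\lambda)}(\alpha,\beta)\rfloor$.

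The argument is essentially a bookkeeping exercise and I foresee no serious obstacle. The one thing worth highlighting is that the two numerical hypotheses do exactly the two jobs required: $\alpha \geq 2\beta$ kills the tail of the $|V_i|$-sum (so nothing has to be known about $|V_2|,|V_3|,\ldots$), while $\alpha > \beta+\tfrac{1}{k}$ is precisely the sign condition needed when the linear inequality in $b$ is inverted at the very end. The places most prone to slip-ups are the sign reversal in the final division and remembering that for a packing $a = rv-bk$ rather than $bk-rv$.
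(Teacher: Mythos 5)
Your proof is correct and follows essentially the same route as the paper: both reduce the hypothesis to the key inequality $b \geq \alpha v - (\alpha-\beta)a$ using the degree-sum identity and $\alpha \geq 2\beta$, then substitute $a = rv - bk$ and use $\alpha > \beta + \tfrac{1}{k}$ to invert the sign when solving for $b$. Your exact decomposition over all $V_i$ with nonnegative tail coefficients is a slightly cleaner bookkeeping of the same step the paper performs via $v_0 \geq \tfrac{1}{2}(2v - v_1 - a)$ and $v_1 \leq a$.
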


\begin{proof}[{\bf Proof of Lemmas \ref{BasicCoveringCalcLemma} and \ref{BasicPackingCalcLemma}}]
Suppose that $\mathcal{D}$ is a $(v,k,\lambda)$-covering or -packing on point set $[v]$ and let $G$ be the excess or leave of $\mathcal{D}$.  Let $b=b(\mathcal{D})$, $r=r(\mathcal{D})$, $d=d(\mathcal{D})$, $a=a(\mathcal{D})$, $V_i=V_i(\mathcal{D})$ for $i \in \{0,1\}$, and $v_i=|V_i|$ for $i \in \{0,1\}$. Note that $v_1+2(v-v_0-v_1) \leq a$ because $\deg_{G}(u) = d+i(k-1)$ for each $u \in V_i$ for $i \in \{0,1\}$, $\deg_{G}(u) \geq d+2(k-1)$ for each $u \in V \setminus (V_0 \cup V_1)$, and $\sum_{u \in [v]}\deg_G(u)=dv+a(k-1)$. It follows that $v_0 \geq \frac{1}{2}(2v-v_1-a)$ and so from our hypotheses we have
$$b \geq \tfrac{1}{2}\alpha(2v-v_1-a)+\beta v_1 = \tfrac{1}{2}\alpha(2v-a)-\tfrac{1}{2}(\alpha-2\beta)v_1.$$
Thus, because $\alpha \geq 2\beta$, it follows from $v_1 \leq |[v] \setminus V_0| \leq a$ that
$$b \geq \tfrac{1}{2}\alpha(2v-a)-\tfrac{1}{2}(\alpha-2\beta)a = \alpha v-(\alpha-\beta)a.$$
Applying the definition of $a$ and solving the resulting inequality for $b$ produces the required result (note that $\alpha > \beta+\frac{1}{k}$ if $\mathcal{D}$ is a $(v,k,\lambda)$-packing).
\end{proof}

\begin{theorem}\label{d>rCoveringTheorem}
Let $v$, $k$ and $\lambda$ be positive integers such that $3 \leq k <v$, let $r$ and $d$ be the integers such that $\lambda(v-1)=r(k-1)-d$ and $0 \leq d < k-1$, let $n=r-\lambda$, and suppose that $r<k$. If $d \geq n$, then
$$C_{\lambda}(v,k) \geq \lceil
CB_{(v,k,\lambda)}(\tfrac{n+1}{2d+2},\tfrac{n+1}{2(d+k)})\rceil.$$
\end{theorem}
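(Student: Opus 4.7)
The plan is to combine Lemmas \ref{BasicCoveringCalcLemma}, \ref{MainLemma}, and \ref{CTVariantCor}(b) by choosing $\alpha = \frac{n+1}{2d+2}$ and $\beta = \frac{n+1}{2(d+k)}$ and showing that every $(v,k,\lambda)$-covering $\mathcal{D}$ has at least $\alpha|V_0(\mathcal{D})| + \beta|V_1(\mathcal{D})|$ blocks. By Lemma \ref{BasicCoveringCalcLemma}, this suffices (and the hypothesis $\alpha \geq 2\beta$ is easy to check: it reduces to $d+k \geq 2d+2$, i.e.\ $d \leq k-2$, which holds since $d < k-1$).

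Let $\mathcal{D}$ be a $(v,k,\lambda)$-covering on $[v]$ with excess $G$ and set $v_i = |V_i(\mathcal{D})|$. The key observation is that $\alpha$ and $\beta$ are precisely the values of $f_n$ at the two vertex weights that occur in $V_0 \cup V_1$: for $u \in V_0$ we have $\wt_G(u) = \deg_G(u) = d$, and for $u \in V_1$ we have $\wt_G(u) = d + k - 1$. The hypothesis $d \geq n$ ensures that both weights are at least $n$, so the second case of the definition of $f_n$ applies, giving $f_n(d) = \frac{n+1}{2d+2} = \alpha$ and $f_n(d+k-1) = \frac{n+1}{2(d+k)} = \beta$.

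The main step is then to apply Lemma \ref{CTVariantCor}(b) to $G$ with $m = n$, $S_0 = V_0$, and $S_1 = V_1$ (the hypothesis on edge weights being trivial for multigraphs). This yields an $n$-independent set $S \subseteq V_0 \cup V_1$ in $G[V_0 \cup V_1]$ of size at least $\lceil v_0 f_n(d) + v_1 f_n(d+k-1) \rceil = \lceil \alpha v_0 + \beta v_1 \rceil$. We then feed this $S$ into Lemma \ref{MainLemma} with $c_u = 1$: for $u \in S \cap V_0$, $\wt_{G[S]}(u) < n = r_{\mathcal{D}}(u) - \lambda$ holds by $n$-independence, and for $u \in S \cap V_1$, $\wt_{G[S]}(u) < n < n+1 = r_{\mathcal{D}}(u) - \lambda$. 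Hence $b(\mathcal{D}) \geq |S| \geq \lceil \alpha v_0 + \beta v_1 \rceil$, as required.

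Minor issues to address: if $V_0$ or $V_1$ is empty, Lemma \ref{CTVariantCor}(b) does not apply directly, so we fall back on Lemma \ref{CTVariantCor}(a) applied to the nonempty side, yielding the relevant one-term bound. I do not anticipate a serious obstacle in this proof; the only real content is recognising that the specific constants $\alpha$ and $\beta$ appearing in the statement of the theorem are engineered to coincide with the values $f_n(d)$ and $f_n(d+k-1)$ that come out of the Caro--Tuza-style estimate, after which everything drops out mechanically from the machinery already assembled in the paper.
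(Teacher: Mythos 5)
Your proposal is correct and is essentially the paper's own argument: the paper likewise obtains an $n$-independent set in $G[V_0\cup V_1]$ of size at least $|V_0|f_n(d)+|V_1|f_n(d+k-1)$ via Lemma \ref{CTVariantCor}(b), feeds it into Lemma \ref{MainLemma}, and then applies Lemma \ref{BasicCoveringCalcLemma} with $\alpha=\frac{n+1}{2d+2}$ and $\beta=\frac{n+1}{2(d+k)}$. Your observation that $d\geq n$ is exactly what puts $d$ and $d+k-1$ into the second branch of $f_n$, and your handling of the degenerate case where $V_0$ or $V_1$ is empty, are both consistent with (indeed slightly more careful than) the paper.
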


\begin{theorem}\label{d>rPackingTheorem}
Let $v$, $k$ and $\lambda$ be positive integers such that $3 \leq k <v$, let $r$ and $d$ be the integers such that $\lambda(v-1)=r(k-1)+d$ and $0 \leq d < k-1$, let $n=r-\lambda$, and suppose that $r<k$. If $d \geq n$, then
\begin{itemize}
    \item[(a)]
$D_{\lambda}(v,k) \leq \lfloor DB_{(v,k,\lambda)}(\tfrac{n+1}{2d+2},0)\rfloor$ if
$k(n+1) > 2d+2$; and
    \item[(b)]
$D_{\lambda}(v,k) \leq \lfloor
DB_{(v,k,\lambda)}(\tfrac{n}{2d+2},\tfrac{n}{2(d+k)})\rfloor$ if
$nk(k-1) > 2(d+1)(d+k)$.
\end{itemize}
\end{theorem}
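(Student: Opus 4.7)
The plan is to combine Lemma~\ref{MainLemma}, Lemma~\ref{CTVariantCor}, and Lemma~\ref{BasicPackingCalcLemma}. Fix a $(v,k,\lambda)$-packing $\mathcal{D}$ with leave $G$, and write $v_i=|V_i(\mathcal{D})|$. For $u\in V_i$ we have $\deg_G(u)=d+i(k-1)$ and $r_{\mathcal{D}}(u)-\lambda=n-i$, so, taking $c_u=1$ in Lemma~\ref{MainLemma}, applying it to an $m$-independent subset of $V_0\cup V_1$ is legal provided $m\leq n$ when the set lies entirely in $V_0$ and $m\leq n-1$ when it meets $V_1$. The two parts of the theorem correspond to these two choices.

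For part (a), I would apply Lemma~\ref{CTVariantCor}(a) to $G[V_0]$ with $m=n$. Every vertex of $V_0$ has $\wt_G(u)=d$, so the average weight $x$ equals $d$; since $d\geq n$, we have $f_n(d)=\frac{n+1}{2d+2}$. The lemma then produces an $n$-independent set of size at least $\lceil v_0\cdot\frac{n+1}{2d+2}\rceil$, and Lemma~\ref{MainLemma} gives $b(\mathcal{D})\geq \frac{n+1}{2d+2}v_0$. Feeding this into Lemma~\ref{BasicPackingCalcLemma} with $\alpha=\frac{n+1}{2d+2}$ and $\beta=0$ finishes the argument; the admissibility conditions $\alpha\geq 2\beta$ and $\alpha>\beta+\frac{1}{k}$ reduce to $0\geq 0$ and $k(n+1)>2d+2$, the latter being exactly the assumption of (a).

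For part (b), I would instead apply Lemma~\ref{CTVariantCor}(b) with $S_0=V_0$, $S_1=V_1$, and $m=n-1$. The smaller value of $m$ is forced by the need to handle $V_1$, where $r_{\mathcal{D}}(u)-\lambda=n-1$. The two averages are $x_0=d$ and $x_1=d+k-1$, both of which are at least $n-1$ (the second uses $k>r>n-1$), so $f_{n-1}(x_0)=\frac{n}{2d+2}$ and $f_{n-1}(x_1)=\frac{n}{2(d+k)}$. Hence $b(\mathcal{D})\geq \frac{n}{2d+2}v_0+\frac{n}{2(d+k)}v_1$, and Lemma~\ref{BasicPackingCalcLemma} with $\alpha=\frac{n}{2d+2}$, $\beta=\frac{n}{2(d+k)}$ closes the argument. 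The condition $\alpha\geq 2\beta$ simplifies to $k\geq d+2$, which holds because $d<k-1$. A direct computation gives $\alpha-\beta=\frac{n(k-1)}{2(d+1)(d+k)}$, so $\alpha>\beta+\frac{1}{k}$ is precisely $nk(k-1)>2(d+1)(d+k)$, the hypothesis of (b). The degenerate cases in which $V_0$ or $V_1$ is empty can be handled by reverting to part (a) of Lemma~\ref{CTVariantCor} on whichever set is nonempty, producing at least the same inequality.

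The conceptual work is entirely absorbed into Lemmas~\ref{CTVariantCor} and~\ref{BasicPackingCalcLemma}, so the main task here is bookkeeping: verifying that the input conditions of Lemma~\ref{CTVariant} (notably the integrality condition on positive edge weights between vertices of weight $\geq m$) are automatic for multigraphs, and lining up $m$ with $d$ so that $d\geq m$ places both averages in the ``$x\geq m$'' branch of the piecewise definition of $f_m$. Once these verifications are made, the computations of $\alpha-\beta$ and of $\alpha$ mesh exactly with the two hypotheses $k(n+1)>2d+2$ and $nk(k-1)>2(d+1)(d+k)$ that distinguish parts (a) and (b).
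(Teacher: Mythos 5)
Your proposal is correct and is essentially the paper's own argument: part (a) via Lemma \ref{CTVariantCor}(a) on $G[V_0]$ with $m=n$, part (b) via Lemma \ref{CTVariantCor}(b) on $V_0\cup V_1$ with $m=n-1$ (forced by $r_{\mathcal{D}}(u)-\lambda=n-1$ on $V_1$), feeding the resulting bounds into Lemma \ref{BasicPackingCalcLemma} with exactly the same $(\alpha,\beta)$ and the same admissibility checks $\alpha\geq 2\beta$ and $\alpha>\beta+\frac{1}{k}$ matching the hypotheses $k(n+1)>2d+2$ and $nk(k-1)>2(d+1)(d+k)$. Your extra remarks on the integrality condition in Lemma \ref{CTVariant} and on empty $V_0$ or $V_1$ are harmless additions, not deviations.
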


\begin{proof}[{\bf Proof of Theorems \ref{d>rCoveringTheorem} and \ref{d>rPackingTheorem}}]
Suppose that $\mathcal{D}$ is a $(v,k,\lambda)$-covering or -packing and let $G$ be the excess or
leave of $\mathcal{D}$. Note that $r=r(\mathcal{D})$ and $d=d(\mathcal{D})$. Let $b=b(\mathcal{D})$, $a=a(\mathcal{D})$, $V_i=V_i(\mathcal{D})$ for $i \in \{0,1\}$ and $v_i=|V_i|$ for $i \in \{0,1\}$.

\noindent{\bf Bound \ref{d>rPackingTheorem}(a).} Observe that if $S$ is an $n$-independent set in $G[V_0]$ then $b \geq |S|$ by Lemma \ref{MainLemma}. By Lemma \ref{CTVariantCor}(a), $G[V_0]$ has an $n$-independent set of size at least $|V_0|f_{n}(d)$ where $f_{n}(d)=\frac{n+1}{2d+2}$ and hence
$$b \geq \mfrac{n+1}{2d+2}|V_0|.$$
Applying Lemma \ref{BasicPackingCalcLemma} with $\alpha=\frac{n+1}{2d+2}$ and $\beta=0$ yields the desired bound (note that clearly $\alpha \geq 2\beta$ and that $k(n+1) > 2d+2$ implies $\alpha > \beta+\frac{1}{k}$).

\noindent{\bf Bounds \ref{d>rCoveringTheorem} and \ref{d>rPackingTheorem}(b).} Let $m=n$ if $\mathcal{D}$ is a $(v,k,\lambda)$-covering and
$m=n-1$ if $\mathcal{D}$ is a $(v,k,\lambda)$-packing. Observe that if $S$ is an $m$-independent set in $G[V_0 \cup V_1]$, then $b \geq |S|$ by Lemma \ref{MainLemma}. By Lemma \ref{CTVariantCor}(b), $G[V_0 \cup V_1]$ has an $m$-independent set of size at least $|V_0|f_{m}(d)+|V_1|f_{m}(d+k-1)$ where $f_{m}(d)=\frac{m+1}{2d+2}$ and $f_{m}(d+k-1)=\frac{m+1}{2(d+k)}$ and hence
    $$b \geq \mfrac{m+1}{2d+2}|V_0|+\mfrac{m+1}{2(d+k)}|V_1|.$$
Applying Lemma \ref{BasicCoveringCalcLemma} or \ref{BasicPackingCalcLemma} with $\alpha=\frac{m+1}{2d+2}$ and $\beta=\frac{m+1}{2(d+k)}$ yields the appropriate bound (note that $\alpha \geq 2\beta$ because $k \geq d+2$ and, if $\mathcal{D}$ is a $(v,k,\lambda)$-packing, that $nk(k-1) > 2(d+1)(d+k)$ implies $\alpha > \beta+\frac{1}{k}$).
\end{proof}

It is never the case that both Theorems \ref{NeatCoveringTheorem} and \ref{d>rCoveringTheorem} or both Theorems \ref{NeatPackingTheorem} and \ref{d>rPackingTheorem} apply to the same parameter set because Theorems \ref{NeatCoveringTheorem} and \ref{NeatPackingTheorem} require $d < r-\lambda$ and Theorems \ref{d>rCoveringTheorem} and \ref{d>rPackingTheorem} require $d \geq r-\lambda$. Note that there are some parameter sets for which the bound of Theorem~\ref{d>rPackingTheorem}(a) is smaller than the bound of Theorem \ref{d>rPackingTheorem}(b) and others for which the reverse is true. We now compare the bound of Theorem \ref{d>rCoveringTheorem} to the Sch{\"o}nheim bound. Observe that, for real numbers $\alpha$ and $\beta$ such that $\alpha \geq \beta > 0$, we have
\begin{equation}\label{d>rThComp}
CB_{(v,k,\lambda)}(\alpha,\beta)-\mfrac{rv}{k}=\mfrac{v(k\alpha-r)}{k(k(\alpha-\beta)+1)}.
\end{equation}
Note that if $C_{\lambda}(v,k) \geq CB_{(v,k,\lambda)}(\alpha,\beta)$, then $C_{\lambda}(v,k) \geq CB_{(v,k,\lambda)}(\alpha,\beta')$ for any $0 \leq \beta' \leq \beta$. This is because $CB_{(v,k,\lambda)}(\alpha,\beta)$ is monotonically increasing in $\beta$ when $k\alpha>r$, and $CB_{(v,k,\lambda)}(\alpha,\beta')$ is at most the Sch{\"o}nheim bound when $k\alpha \leq r$. Setting $\alpha=\frac{n+1}{2d+2}$ and $\beta=\frac{n+1}{2(d+k)}$, we see that the bound of Theorem \ref{d>rCoveringTheorem} will match or exceed the Sch{\"o}nheim bound whenever $k(n+1)>2r(d+1)$.

Infinite families of parameter sets for which the bounds of Theorem \ref{d>rCoveringTheorem} yield arbitrarily large improvements on the Sch{\"o}nheim bound can be found. Suppose that $\lambda$ is constant and $k \rightarrow \infty$. When $\alpha=\frac{n+1}{2d+2}$ and $\beta = \frac{n+1}{2(d+k)}$, \eqref{d>rThComp} implies that
$$CB_{(v,k,\lambda)}(\alpha,\beta)-\tfrac{rv}{k}=\Omega(\tfrac{r^2}{dk})(k-2d-2)-O(1),$$
noting that $v = \Theta(kr)$, and that $\alpha-\beta = O(1)$ because $\alpha \leq \frac{1}{2}$ and $2\alpha \geq \beta$. So, for example, if $k-2d \rightarrow \infty$, $d  \geq r-\lambda$ and $r = \Theta(k)$, we will obtain arbitrarily large improvements on the Sch{\"o}nheim bound. Table \ref{d>rCoveringImprovementsTable} gives examples of parameter sets for which Theorem \ref{d>rCoveringTheorem} strictly improves on the Sch{\"o}nheim bound.

\begin{table}[H]
\begin{small}
\begin{center}
\begin{tabular}{|c|l|}
\hline
$k$ & $v$ \\ \hline
$10$ & $34$ \\
$11$ & $38$ \\
$12$ & $41, 42_{2}, 52$ \\
$13$ & $45, 46, 57_{2}$ \\
$14$ & $48, 49_{2}, 50, 61, 62, 74$ \\
$15$ & $52, 53, 54_{2}, 65, 66_{2}, 67_{2}, 79, 80_{2}$ \\
$16$ & $55, 56, 57, 58_{2}, 70, 71, 72_{2}, 85, 86, 100$ \\
$17$ & $59, 60, 61, 62_{2}, 74, 76, 77_{2}, 90, 91, 92_{2}, 106, 107$ \\
$18$ & $62, 63, 64, 65_{2}, 66_{2}, 79, 80, 81_{2}, 82_{3}, 96, 97, 98_{2}, 113, 114_{2}, 130$ \\
$19$ & $66, 67, 68, 69_{2}, 70_{3}, 83, 85, 86_{2}, 87_{3}, 101, 102, 103_{2}, 104_{3}, 119, 120, 121_{2}, 137, 138$ \\
$20$ & $69, 70, 71, 72, 73_{2}, 74_{3}, 88, 89, 90_{2}, 91_{2}, 92_{3}, 108, 109_{2}, 110_{3}, 127_{2}, 128_{3}, 145, 146_{2}, 164$ \\
\hline
\end{tabular}
\end{center}
\end{small}
\vspace{-0.6cm}

\caption{For $\lambda=1$ and each $k\in \{3,\ldots,20\}$, the values of $v > \frac{13}{4}k$ for which Theorem \ref{d>rCoveringTheorem} strictly improves on the Sch{\"o}nheim bound. Values of $v$ for which the Sch{\"o}nheim bound is improved by $i \geq 2$ are marked with a subscript $i$.}
\label{d>rCoveringImprovementsTable}
\end{table}

\section{More bounds for the case $d<r-\lambda$}\label{dSmallSec}

\begin{lemma}\label{IndepSetCase2}
Let $v$, $k$ and $\lambda$ be positive integers such that $3 \leq k < v$, let $\mathcal{D}$ be a $(v,k,\lambda)$-covering or -packing, and let $G$ be the excess or leave of
$\mathcal{D}$. Let $r=r(\mathcal{D})$, let $d=d(\mathcal{D})$, let $V_i=V_i(\mathcal{D})$ for $i \in \{0,1\}$, and suppose $d<r-\lambda$. Let $m=r-\lambda+1$ if $\mathcal{D}$ is a $(v,k,\lambda)$-covering and $m=r-\lambda-1$ if $\mathcal{D}$ is a $(v,k,\lambda)$-packing. Let $c$ be a real number such that $\frac{d}{r-\lambda} < c < 1$ and let $G^*$ be the edge-weighted graph on vertex set $V_0 \cup V_1$ such that
\begin{itemize}
    \item
$\wt_{G^*}(uw)=\mu_G(uw)$ for all distinct $u,w \in V_1$;
    \item
$\wt_{G^*}(uw)=c\mu_G(uw)$ for all $u \in V_0$, $w \in V_1$; and
    \item
$\wt_{G^*}(uw)=0$ for all distinct $u,w \in V_0$.
\end{itemize}
If $S$ is an $m$-independent set in $G^*$, then $\mathcal{D}$ has at least $|S|$ blocks.
\end{lemma}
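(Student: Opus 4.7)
My plan is to apply Lemma \ref{MainLemma} directly, choosing the per-row weights $(c_u)_{u \in S}$ to mirror the vertex-weighting inside $G^*$. Specifically, I would set $c_u = 1$ for $u \in V_1 \cap S$ and $c_u = c$ for $u \in V_0 \cap S$. With this choice, for any $u \in S$ one has
$$\medop\sum_{w \in S \setminus \{u\}} c_w\mu_{G[S]}(uw) = \wt_{G^*[S]}(u) + c\medop\sum_{w \in V_0 \cap S,\, w \neq u}\mu_G(uw),$$
where the second (extra) sum is present precisely when $u \in V_0 \cap S$, since the $V_0$-internal edges contribute to $G[S]$ with $c_w=c$ but are absent from $G^*$.

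The case $u \in V_1 \cap S$ is then immediate. In both the covering and packing settings one has $r_\mathcal{D}(u) - \lambda = m$ when $u \in V_1$ (because $r_\mathcal{D}(u)$ equals $r+1$ or $r-1$ respectively), so $c_u(r_\mathcal{D}(u)-\lambda) = m$ and the required inequality is $\wt_{G^*[S]}(u) < m$, which is exactly the definition of $m$-independence of $S$ in $G^*$.

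The case $u \in V_0 \cap S$ is where the real work lies. Here $c_u(r_\mathcal{D}(u)-\lambda) = c(r-\lambda)$ while the left-hand side is $c\alpha + \beta$, where $\alpha = \sum_{w \in V_0 \cap S,\, w \neq u}\mu_G(uw)$ and $\beta = \sum_{w \in V_1 \cap S}\mu_G(uw)$. Note that $m$-independence is of no direct help in bounding $\alpha$, because the $V_0$-internal edges carry weight $0$ in $G^*$. Instead I would use only the crude degree bound $\alpha + \beta \leq \deg_G(u) = d$ together with $c<1$ to get
$$c\alpha + \beta \leq c(d-\beta) + \beta = cd + (1-c)\beta \leq cd + (1-c)d = d,$$
and then invoke the hypothesis $c > d/(r-\lambda)$ to conclude $c\alpha + \beta \leq d < c(r-\lambda)$, which is the desired strict inequality.

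The main obstacle to spot is the hybrid strategy itself: the degree bound does the work on $V_0$ while $m$-independence does the work on $V_1$, and the scaling constant $c$ must be calibrated to accommodate both. It has to be small enough ($c<1$) for the degree-bound step to yield a strict inequality on $V_0$, yet large enough ($c > d/(r-\lambda)$) for the final comparison $d < c(r-\lambda)$ to hold. This is precisely the interval of $c$ given by the hypothesis, and once this is recognised the verification is a short calculation.
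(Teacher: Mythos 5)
Your proof is correct and is essentially identical to the paper's: the same weights $c_u=c$ on $V_0\cap S$ and $c_u=1$ on $V_1\cap S$ are fed into Lemma \ref{MainLemma}, with $m$-independence of $S$ in $G^*$ handling the rows in $V_1\cap S$ and the degree bound $\deg_G(u)=d<c(r-\lambda)$ handling the rows in $V_0\cap S$. (One cosmetic slip: your first displayed identity fails for $u\in V_0\cap S$, where $\wt_{G^*[S]}(u)=c\beta$ while the true left-hand side is $c\alpha+\beta$; but your subsequent case analysis uses the correct expression $c\alpha+\beta$, so the argument is unaffected.)
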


We call the graph $G^*$ in Lemma \ref{IndepSetCase2} the \emph{$c$-reduced excess} or
\emph{$c$-reduced leave} of $\mathcal{D}$.

\begin{proof}
Let $S$ be an $m$-independent set in $G^*$ and let $S_i=S \cap V_i$ for $i \in \{0,1\}$. We show that we can apply Lemma \ref{MainLemma} to $G[S]$ choosing $c_u=c$ for $u \in S_0$ and $c_u=1$ for $u \in S_1$. This will suffice to complete the proof.

If $u \in S_0$, then $c_u=c$, $r_{\mathcal{D}}(u)-\lambda = r-\lambda$, and
$$\medop \sum_{w \in S \setminus \{u\}} c_w\mu_{G[S]}(uw) \leq \deg_{G[S]}(u) \leq d < c(r-\lambda)=c_u(r_{\mathcal{D}}(u)-\lambda).$$
If $u \in S_1$, then $c_u=1$, $r_{\mathcal{D}}(u)-\lambda = m$, and
$$\medop \sum_{w \in S \setminus \{u\}} c_w\mu_{G[S]}(uw)
= \medop \sum_{w \in S_1 \setminus \{u\}}\mu_{G[S]}(uw)+c\medop \sum_{w \in S_0} \mu_{G[S]}(uw) = \wt_{G^*[S]}(u) < m = c_u(r_{\mathcal{D}}(u)-\lambda),$$
where the inequality follows from the fact that $S$ is an $m$-independent set in $G^*$.
\end{proof}

\begin{theorem}\label{d<rTrickyCoveringTheorem}
Let $v$, $k$ and $\lambda$ be positive integers such that $3 \leq k <v$, let $r$ and $d$ be the integers such that $\lambda(v-1)=r(k-1)-d$ and $0 \leq d < k-1$, let $n=r-\lambda$, and suppose that $r<k$. If $d < n$, then
\begin{itemize}
    \item[(a)]
$C_\lambda(v,k) \geq \left\lceil CB_{(v,k,\lambda)}\left(1-\frac{d^2}{2n(n+1)},\frac{n+2}{2(d+k)}\right)\right\rceil$;
    \item[(b)]
$C_\lambda(v,k) \geq \left\lceil CB_{(v,k,\lambda)}\left(1,1-\frac{d(d+k-1)}{n(n+1)}\right)\right\rceil$ if $d \geq
\frac{n}{2}$ and $d(d+k-1) < n(n+1)$; and
    \item[(c)]
$C_\lambda(v,k) \geq \left\lceil
CB_{(v,k,\lambda)}\left(1,\sqrt{\frac{d(n+2)}{(n+1)(n-d)}}-\frac{d(d+k)}{2(n+1)(n-d)}\right)\right\rceil$ if $d < \frac{n}{2}$ and $4(n+1)(n+2)(n-d) > d(d+k)^2$.
\end{itemize}
\end{theorem}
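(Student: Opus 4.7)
The plan mirrors the proof of Theorem \ref{d>rCoveringTheorem}: establish, for any $(v,k,\lambda)$-covering $\mathcal{D}$, a bound of the form $b(\mathcal{D}) \geq \alpha |V_0(\mathcal{D})| + \beta |V_1(\mathcal{D})|$ for suitable $\alpha,\beta$, and then appeal to Lemma \ref{BasicCoveringCalcLemma}. The new feature under the hypothesis $d<n:=r-\lambda$ is that we may pass to the $c$-reduced excess $G^*$ of $\mathcal{D}$ via Lemma \ref{IndepSetCase2}, and apply Lemma \ref{CTVariantCor} to $G^*$. The three parts correspond to three different choices of $c \in (d/n, 1]$ and to different ways of extracting an $(n+1)$-independent set from $G^*$.

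For part (a), take $c$ infinitesimally above $d/n$. Since $G^*[V_0]$ is edgeless, each $u \in V_0$ has $G^*$-weight at most $cd$, and each $u \in V_1$ has $G^*$-weight at most $d+k-1$. Apply Lemma \ref{CTVariantCor}(b) with partition $\{V_0,V_1\}$ and $m=n+1$: by monotonicity of $f_{n+1}$ and the uniform bounds on the $V_i$-averages, this produces an $(n+1)$-independent set of size at least $|V_0| f_{n+1}(cd) + |V_1| f_{n+1}(d+k-1)$. Using $cd < n+1$ (since $d<n$ and $c<1$) and the fact that $k \geq r+1 \geq n+2$ forces $d+k-1 \geq n+1$, we have $f_{n+1}(cd) = 1 - \tfrac{cd}{2(n+1)}$ and $f_{n+1}(d+k-1) = \tfrac{n+2}{2(d+k)}$; letting $c \to d/n$ and invoking Lemma \ref{IndepSetCase2} yields the bound $b \geq (1 - \tfrac{d^2}{2n(n+1)}) |V_0| + \tfrac{n+2}{2(d+k)} |V_1|$, and a short algebraic check confirms $\alpha \geq 2\beta$, so Lemma \ref{BasicCoveringCalcLemma} delivers part (a).

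Parts (b) and (c) aim at $\alpha = 1$, which is legitimate because $V_0$ is trivially $(n+1)$-independent in $G^*$. The construction keeps all of $V_0$ and prunes $V_1$ by running $(n+1)$-MAX using each $V_1$-vertex's full $G^*$-weight (its $V_1$-internal degree together with its $c$-weighted contribution from $V_0$). A Caro--Tuza style averaging then produces a bound $b \geq |V_0| + \beta(c) |V_1|$ for some explicit function $\beta(c)$. Part (b) corresponds to $c=1$, at which $\beta(1) = 1 - \tfrac{d(d+k-1)}{n(n+1)}$; the hypotheses $d \geq n/2$ and $d(d+k-1) < n(n+1)$ are respectively the conditions $\alpha \geq 2\beta$ and $\beta > 0$ required by Lemma \ref{BasicCoveringCalcLemma}. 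Part (c) treats $d < n/2$, where $c=1$ is no longer optimal; the first-order optimisation of $\beta(c)$ over $c \in (d/n, 1)$ yields a quadratic in $c$ whose appropriate root produces the displayed square-root expression, and $4(n+1)(n+2)(n-d) > d(d+k)^2$ is the condition ensuring admissibility of the optimising $c$ and positivity of the resulting $\beta$.

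The main obstacle is twofold. First, invoking Lemma \ref{CTVariantCor} on $G^*$ requires that any edge with both endpoints of $G^*$-weight at least $n+1$ carry weight at least $1$: this holds automatically for all edges except those between $V_1$-vertices that are non-adjacent in $G$, and handling these needs either a structural argument that such pairs cannot both attain $G^*$-weight $\geq n+1$ or a small perturbation of $G^*$ followed by a limit. Second, the optimisation in (c) is algebraically the most delicate step: one must set up $\beta(c)$ correctly from the Caro--Tuza averaging, solve the first-order quadratic, and verify that the resulting closed form precisely matches the statement under the hypothesis $4(n+1)(n+2)(n-d) > d(d+k)^2$.
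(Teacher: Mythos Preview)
Your treatment of part (a) is correct and matches the paper: take $c\to d/n$ in the $c$-reduced excess, apply Lemma~\ref{CTVariantCor}(b), and use the crude bounds $\wt_{G^*}(u)\le cd$ on $V_0$ and $\wt_{G^*}(u)\le d+k-1$ on $V_1$.

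For parts (b) and (c), however, there is a genuine gap: varying $c$ is the wrong parameter, and the specific claim that $c=1$ yields $\beta(1)=1-\tfrac{d(d+k-1)}{n(n+1)}$ is incorrect. Write $s_c=v_0\,f_m(ce/v_0)+v_1\,f_m\bigl(d+k-1-(1-c)e/v_1\bigr)$ for the bound coming from Lemma~\ref{CTVariantCor}(b) applied to $G^*$ (here $e$ is the number of $V_0$--$V_1$ edges in $G$). Both arguments of $f_m$ are increasing in $c$ and $f_m$ is decreasing, so $s_c$ is \emph{non-increasing} in $c$; the optimal choice is always $c\to d/n$, and taking $c=1$ only makes things worse. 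Moreover, your ``keep all of $V_0$, prune $V_1$ by $(n{+}1)$-MAX with full $G^*$-weights'' does not admit a Caro--Tuza bound of the shape $v_0+\beta(c)v_1$: when a vertex $w\in V_1$ is deleted, its $G^*$-weight includes the $c$-weighted contribution from $V_0$, but the compensating gains $f_m(\wt'(u))-f_m(\wt(u))$ come only from $u\in V_1$, so the key inequality in the proof of Lemma~\ref{CTVariant} fails by exactly that missing $V_0$ portion.

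What the paper does instead is fix $c\to d/n$ once and for all and treat $e$ as the free variable. The averages become $de/(nv_0)$ on $V_0$ and at most $d+k-1-(n-d)e/(nv_1)$ on $V_1$, giving a bound $h(e)$ which is computed piecewise (a linear piece $h_1$ where the $V_1$ average drops below $m$, and a rational piece $h_2$ otherwise). Parts (b) and (c) are obtained by minimising $h$ over $e\in[0,\min(v_0d,\,v_1(d+k-1))]$: when $d\ge n/2$ both pieces are decreasing and the minimum is at $e=v_1(d+k-1)$, yielding $h=v_0+v_1\bigl(1-\tfrac{d(d+k-1)}{n(n+1)}\bigr)$; when $d<n/2$ the minimum sits at the interior critical point of $h_2$, and evaluating there produces the square-root expression. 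So the optimisation in your (c) should be over $e$, not over $c$.

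Finally, your first ``obstacle'' is not one: a weight-$0$ edge contributes $0$ to both sides of the inequality~\eqref{indepReduction} in the proof of Lemma~\ref{CTVariant}, so the conclusion holds without the weight-$\ge 1$ hypothesis for such edges, and the paper applies Lemma~\ref{CTVariantCor} to $G^*$ without further comment.
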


\begin{theorem}\label{d<rTrickyPackingTheorem}
Let $v$, $k$ and $\lambda$ be positive integers such that $3 \leq k <v$, let $r$ and $d$ be the integers such that $\lambda(v-1)=r(k-1)+d$ and $0 \leq d < k-1$, let $n=r-\lambda$, and suppose that $r<k$. If $d < n$, then
\begin{itemize}
    \item[(a)]
$D_\lambda(v,k) \leq \left\lfloor DB_{(v,k,\lambda)}\left(1-\frac{d^2}{2n(n-1)},\frac{n}{2(d+k)}\right)\right\rfloor$;
    \item[(b)]
$D_\lambda(v,k) \leq \left\lfloor DB_{(v,k,\lambda)}\left(1,1-\frac{d(d+k-1)}{n(n-1)}\right)\right\rfloor$ if $d \geq
\frac{n}{2}$ and $d(d+k-1) < n(n-1)$; and
    \item[(c)]
$D_\lambda(v,k) \leq \left\lfloor
DB_{(v,k,\lambda)}\left(1,\sqrt{\frac{dn}{(n-1)(n-d)}}-\frac{d(d+k)}{2(n-1)(n-d)}\right)\right\rfloor$ if $d <
\frac{n}{2}$ and $4n(n-1)(n-d) > d(d+k)^2$.
\end{itemize}
\end{theorem}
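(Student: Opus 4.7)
The plan is to follow the strategy of Theorem \ref{d>rPackingTheorem}, but to apply Lemma \ref{IndepSetCase2} (enabled by $d<n$) and work in the $c$-reduced leave $G^*$ of a packing $\mathcal{D}$. Fix $c\in(d/n,1)$, and write $V_i=V_i(\mathcal{D})$, $v_i=|V_i|$, $m=n-1$, $b=b(\mathcal{D})$. I would apply Lemma \ref{CTVariantCor}(b) to $G^*[V_0\cup V_1]$ with $S_0=V_0$, $S_1=V_1$. In $G^*$ each $u\in V_0$ has $\wt_{G^*}(u)\leq cd$ (only edges to $V_1$ contribute, each with weight $c$, and $\deg_G(u)=d$), and each $u\in V_1$ has $\wt_{G^*}(u)\leq d+k-1$ (trivial since $c\leq 1$). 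Since $d<n$ and $c<1$ give $cd<n-1=m$, and $r<k$ gives $d+k-1\geq n-1$, the relevant values are $f_m(cd)=1-\tfrac{cd}{2(n-1)}$ and $f_m(d+k-1)=\tfrac{n}{2(d+k)}$. Monotonicity of $f_m$ and Lemma \ref{CTVariantCor}(b) then yield $b\geq v_0\bigl(1-\tfrac{cd}{2(n-1)}\bigr)+v_1\cdot\tfrac{n}{2(d+k)}$.

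For (a), I would take $c\to(d/n)^+$ (the bound is continuous in $c$, so it passes to the limit despite the strict inequality required by Lemma \ref{IndepSetCase2}); this gives $\alpha=1-\tfrac{d^2}{2n(n-1)}$ and $\beta=\tfrac{n}{2(d+k)}$, and Lemma \ref{BasicPackingCalcLemma} delivers the claim.

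For (b) and (c) the target $\alpha=1$ cannot come from the simple estimate above, so a refinement is needed. The key observation is that for any $c\in(d/n,1)$ every $u\in V_0$ has $\wt_{G^*}(u)\leq cd<m$, and this inequality is preserved throughout $m$-MAX (whose deletions only decrease weights); hence $V_0$ is entirely in the output $m$-independent set, so $b\geq v_0+|T|$, where $T\subseteq V_1$ consists of the surviving $V_1$-vertices. Bounding $|T|$ reduces to a Caro--Tuza-style problem on $G[V_1]$ with vertex-dependent thresholds: each $u\in V_1$ carries a fixed ``debt'' $c\sum_{w\in V_0}\mu_G(uw)$ that reduces its effective threshold below $m$. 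I would adapt the proof of Lemma \ref{CTVariant} to this setting and then optimise the resulting bound over $c\in(d/n,1)$. The bifurcation between (b) and (c) at $d=n/2$ reflects whether the optimum lies at an endpoint of the feasible $c$-range (producing the rational $\beta$ of (b)) or at an interior critical point (producing the square root in (c)), with the hypotheses $d(d+k-1)<n(n-1)$ and $4n(n-1)(n-d)>d(d+k)^2$ being the exact conditions ensuring $\beta>0$ and that the optimiser is feasible. The main obstacle is the Caro--Tuza adaptation with vertex-dependent thresholds; once it is in place, verifying the side hypotheses $\alpha\geq 2\beta$ and $\alpha>\beta+\tfrac{1}{k}$ of Lemma \ref{BasicPackingCalcLemma} reduces to arithmetic consequences of $d<n<k$.
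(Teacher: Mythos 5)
Your part (a) is essentially the paper's argument: bound the $G^*$-weights on $V_0$ and $V_1$, combine Lemma \ref{IndepSetCase2} with Lemma \ref{CTVariantCor}(b), let $c\to d/n$ by continuity, and finish with Lemma \ref{BasicPackingCalcLemma} (you should still verify $\alpha\geq2\beta$ and $\alpha>\beta+\tfrac{1}{k}$, which the paper does with a short but non-obvious computation).

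For (b) and (c), however, there is a genuine gap: the proposal stops exactly where the real work begins. The ``Caro--Tuza with vertex-dependent thresholds'' lemma is neither stated nor proved, and no computation is offered showing that, after optimising over $c$, it yields the specific constants $\beta_{\rm b}=1-\frac{d(d+k-1)}{n(n-1)}$ and $\beta_{\rm c}=\sqrt{\frac{dn}{(n-1)(n-d)}}-\frac{d(d+k)}{2(n-1)(n-d)}$ under exactly the stated side conditions. Moreover, your account of where the (b)/(c) dichotomy comes from (endpoint versus interior optimum in the parameter $c$) is not the mechanism that works: in the paper $c$ is always sent to $d/n$, and the free quantity is $e$, the number of $G$-edges between $V_0$ and $V_1$. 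Applying Lemma \ref{CTVariantCor}(b) to the $(d/n)$-reduced leave gives a lower bound $h(e)$ on $b$ through the \emph{average} weights $de/(nv_0)$ and $d+k-1-(n-d)e/(nv_1)$, and one minimises this piecewise function of $e$ over $0\leq e\leq\min(v_0d,\,v_1(d+k-1))$: for $d\geq n/2$ it is decreasing, the minimum sits at $e=v_1(d+k-1)$ (the hypothesis $d(d+k-1)<n(n-1)$ places this endpoint in the correct piece and keeps $\beta_{\rm b}>0$), giving (b); for $d<n/2$ the minimum is at an interior critical point of the middle piece, producing the square root in (c). Your observation that every vertex of $V_0$ survives $m$-MAX on $G^*$ is correct (its $G^*$-weight is at most $cd<m$ and weights only decrease), but by itself it does not give the theorem, and it is unclear that the ``debt'' route recovers the stated constants: in the configuration where each $V_1$-vertex sends all of its degree $d+k-1$ into $V_0$, your reduced problem on $G[V_1]$ is edgeless and (under the case (b) hypothesis) all of $V_1$ survives, so the true worst case lies at some intermediate distribution of cross-edges --- identifying and optimising over it is precisely the missing analysis. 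Until that lemma is proved and the optimisation carried out (and $\alpha_i\geq2\beta_i$, $\alpha_i>\beta_i+\tfrac{1}{k}$ checked for the resulting pairs), parts (b) and (c) remain unproved.
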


\begin{proof}
Suppose that $\mathcal{D}$ is a $(v,k,\lambda)$-covering or -packing and let $G$ be the excess or
leave of $\mathcal{D}$. Note that $r=r(\mathcal{D})$ and $d=d(\mathcal{D})$. Let $b=b(\mathcal{D})$, $a=a(\mathcal{D})$, $V_i=V_i(\mathcal{D})$ for $i \in \{0,1\}$ and $v_i=|V_i|$ for $i \in \{0,1\}$. Let $m=n+1$ if $\mathcal{D}$ is a $(v,k,\lambda)$-covering and $m=n-1$ if
$\mathcal{D}$ is a $(v,k,\lambda)$-packing. It follows from these definitions and from $r<k$ that $k \geq m+1$. Let
$$(\alpha_{\rm a},\beta_{\rm a})=\left(1-\tfrac{d^2}{2mn},\tfrac{m+1}{2(d+k)}\right), \left(\alpha_{\rm b},\beta_{\rm b}\right)=\left(1,1-\tfrac{d(d+k-1)}{mn}\right),  (\alpha_{\rm c},\beta_{\rm c})= \left(1,\sqrt{\tfrac{d(m+1)}{m(n-d)}}-\tfrac{d(d+k)}{2m(n-d)}\right).$$

Note that bounds (a), (b) and (c) of the appropriate theorem can be obtained by applying Lemma \ref{BasicCoveringCalcLemma} or \ref{BasicPackingCalcLemma} with $(\alpha,\beta)$ chosen to be $(\alpha_{\rm a},\beta_{\rm a})$, $(\alpha_{\rm b},\beta_{\rm b})$ and $(\alpha_{\rm c},\beta_{\rm c})$ respectively. So it suffices to show that we can apply Lemma \ref{BasicCoveringCalcLemma} or \ref{BasicPackingCalcLemma} in these cases.

We first show, for each $i \in \{{\rm a},{\rm b},{\rm c}\}$ that $\alpha_i \geq 2\beta_i >0$ and that $\alpha_i > \beta_i+\frac{1}{k}$ if $\mathcal{D}$ is a $(v,k,\lambda)$-packing. It is easy to check from the hypotheses and conditions of the appropriate theorem that $\beta_i >0$ for each $i \in \{{\rm a},{\rm b},{\rm c}\}$.

\noindent{\bf Case (a).} Note that
$$\alpha_{\rm a}-2\beta_{\rm a}=1-\mfrac{d^2}{2mn} - \mfrac{m+1}{d+k} = \mfrac{2n(k-m-1)(m-d)+d(n(k-2)-d^2)+dk(n-d)}{2mn(d+k)}$$
and that the latter expression is nonnegative since $k > m \geq d$, $k \geq d+2$ and $n >d$. Also note that, if $\mathcal{D}$ is a $(v,k,\lambda)$-packing,
$$\alpha_{\rm a}-\beta_{\rm a}-\tfrac{1}{k}=1-\mfrac{d^2}{2mn} - \mfrac{m+1}{2(d+k)} - \mfrac{1}{k} = \mfrac{k(k+d)(mn-d^2)+kmn(k-m-3)+dmn(k-2)}{2kmn(d+k)}$$
and that the latter expression is positive since $n > d$ and $k-3\geq m \geq d$ (to see that $k-3\geq m$, note that $r\leq k-1$ and that $m = n-1\leq r-2$ since $\mathcal{D}$ is a packing).

\noindent{\bf Case (b).} Suppose that $d \geq \frac{n}{2}$. Using this and the fact that $k \geq m+1$, we have
$$\beta_{\rm b} = 1-\mfrac{d(d+k-1)}{mn} \leq 1-\mfrac{(d+k-1)}{2m} \leq 1-\mfrac{d+m}{2m} < \mfrac{1}{2}.$$
Thus $\alpha_{\rm b} \geq 2\beta_{\rm b}$ and, if $\mathcal{D}$ is a $(v,k,\lambda)$-packing, $\alpha_{\rm b} > \beta_{\rm b}+\frac{1}{k}$.

\noindent{\bf Case (c).} Note that $\frac{d(m+1)}{m(n-d)} < \frac{d(d+k)}{m(n-d)}$ because $k \geq m+1$. So, because $\sqrt{x}-\frac{x}{2} \leq \frac{1}{2}$ for any nonnegative real number $x$, we have that $\beta_{\rm c} \leq \frac{1}{2}$. Thus $\alpha_{\rm c} \geq 2\beta_{\rm c}$ and if $\mathcal{D}$ is a $(v,k,\lambda)$-packing, then $\alpha_{\rm c} > \beta_{\rm c}+\frac{1}{k}$.

It remains to show that $\mathcal{D}$ has at least $\alpha_i v_0+\beta_i v_1$ blocks for each $i \in \{{\rm a},{\rm b},{\rm c}\}$ (note that we have just shown that $\beta_i>0$ for each $i \in \{{\rm a},{\rm b},{\rm c}\})$. Let $e$ be the number of edges in $G$ that are incident with one vertex in $V_0$ and one vertex in $V_1$ and note that $e \leq \min(v_0d,v_1(d+k-1))$. In particular, $e=0$ if any of $v_0$, $v_1$ or $d$ equal $0$.

Let $c$ be a real number such that $c>\frac{d}{n}$ and $c$ is close to $\frac{d}{n}$, let $G^*$ be the $c$-reduced excess or $c$-reduced leave of $\mathcal{D}$ and note that $\sum_{u \in V_0}\wt_{G^*}(u) = ce$ and $\sum_{u \in V_1}\wt_{G^*}(u) \leq v_1(d+k-1)-(1-c)e$. There is an $m$-independent set $S$ in $G^*$ such that $|S| \geq s_{c}$ where, for $t \in \mathbb{R}$,
\begin{equation}
s_t=
\left\{
  \begin{array}{ll}
    v_0f_{m}\left(\mfrac{te}{v_0}\right)+v_1f_{m}\left(d+k-1-\mfrac{(1-t)e}{v_1}\right), & \hbox{if $e \geq 1$;} \\[0.1cm]
    v_0+v_1f_{m}(d+k-1), & \hbox{if $e=0$.}
  \end{array}
\right.
\end{equation}
The $e=0$ case follows by applying Lemma \ref{CTVariant} directly and using the fact that $f_m$ is monotonically decreasing, and the case $e \geq 1$ follows by applying Lemma \ref{CTVariantCor}(b) with $S_0=V_0$ and $S_1=V_1$ and again using the fact that $f_m$ is monotonically decreasing (note that $v_0,v_1 \geq 1$ if $e \geq 1$). By Lemma \ref{IndepSetCase2}, $\mathcal{D}$ has at least $s_{c}$ blocks. So, because $f_m$ is continuous (see (F1)) and we can choose $c$ arbitrarily close to $\frac{d}{n}$, $\mathcal{D}$ has at least $s_{d/n}$ blocks.

Equivalently, $\mathcal{D}$ has at least $h(e)$ blocks where $h$ is the function from the real interval $[0,\min(v_0d,v_1(d+k-1))]$ to $\mathbb{R}$ defined by
\begin{equation}\label{funcDef}
h(x)=
\left\{
  \begin{array}{ll}
    v_0f_{m}\left(\mfrac{dx}{nv_0}\right)+v_1f_{m}\left(d+k-1-\mfrac{(n-d)x}{nv_1}\right), & \hbox{if $x>0$;} \\[0.1cm]
    v_0+v_1f_{m}(d+k-1), & \hbox{if $x=0$.}
  \end{array}
\right.
\end{equation}
Note that $h$ is well defined because its domain is $\{0\}$ in the case where $v_0=0$ or $v_1=0$ and because if $v_1 \neq 0$ then $x \leq v_1(d+k-1)$ implies $d+k-1-\frac{(n-d)x}{nv_1} \geq 0$. We complete the proof by showing that $h(e) \geq \alpha_i v_0+\beta_i v_1$ for each $i \in \{{\rm a},{\rm b},{\rm c}\}$.

Observe that $h$ is continuous because $f_m$ is continuous (see (F1)) and $f_m(0)=1$. Also note that $\frac{dx}{nv_0} < m$ because $d<n$, $x \leq v_0d$ and $d \leq m$. Let
$z=\frac{nv_1(d+k-m-1)}{n-d}$ and observe that, if $v_1 \neq 0$, $d+k-1-\frac{(n-d)x}{nv_1} \leq m$ is equivalent to $x \geq z$. Thus, by applying the definition of $f_m$ and simplifying we obtain
\begin{equation}\label{funcPiecewise}
h(x) = \left\{
  \begin{array}{ll}
    h_1(x)=v_0+v_1\left(1-\mfrac{d+k-1}{2m}\right) + \mfrac{(n-2d)x}{2mn} & \hbox{if $x \geq z$ and $x > 0$;} \\[0.4cm]
    h_2(x)=v_0+\mfrac{nv_1^2(m+1)}{2nv_1(d+k)-2(n-d)x}-\mfrac{dx}{2mn} & \hbox{if $0 < x < z$;}
\\[0.4cm]
    h_3(x)=v_0+v_1\left(\mfrac{m+1}{2(d+k)}\right) & \hbox{if $x = 0$.}
  \end{array}
\right.
\end{equation}
We consider $h_1$ as a function from $\mathbb{R}$ to $\mathbb{R}$ and $h_2$ as a function from the real interval $(-\infty,\frac{nv_1(d+k)}{n-d})$ to $\mathbb{R}$. Note that $h_2$ is continuous on this domain and that $z \leq \frac{nv_1(d+k)}{n-d}$. Differentiating with respect to $x$ we see that
\begin{align*}
  h'_1(x) &= \mfrac{n-2d}{2mn}; \mbox{ and} \\
  h'_2(x) &= \mfrac{nv_1^2(m+1)(n-d)}{2(nv_1(d+k)-(n-d)x)^2}-\mfrac{d}{2mn}.
\end{align*}
So $h_1$ is monotonically increasing if $d \leq \frac{n}{2}$ and is monotonically decreasing if $d \geq \frac{n}{2}$. Note that, if $v_1,d \neq 0$, then $h'_2(x)$ has exactly one root in the domain we specified, namely $y = nv_1\left(\frac{d+k}{n-d}-\sqrt{\frac{m(m+1)}{d(n-d)}}\right)$. So, if $v_1,d \neq 0$, $h_2(x)$ is monotonically decreasing on the interval $(-\infty,y]$ and monotonically increasing on the interval $[y,\frac{nv_1(d+k)}{n-d})$. Finally, observe that, when $v_1,d \neq 0$, $y< z$ if and only if $d < \frac{mn}{2m+1}$.

\noindent{\bf Case (a).} From \eqref{funcPiecewise}, we have that $h(0) \geq \alpha_{\rm a}v_0+\beta_{\rm a}v_1$. So we may assume that $e>0$ and hence that $v_0,v_1,d \geq 1$. We have
$$f_{m}\left(\mfrac{de}{nv_0}\right) = 1-\mfrac{de}{2mnv_0} \geq 1-\mfrac{d^2}{2mn}=\alpha_{\rm a}$$
where the first equality follows from applying the definition of $f_m$, noting that $\frac{de}{nv_0} < m$, and the inequality follows from the fact that $e \leq v_0d$. We also have
$$f_{m}\left(d+k-1-\mfrac{(n-d)e}{nv_1}\right) \geq f_m(d+k-1) = \mfrac{m+1}{2(d+k)}=\beta_{\rm a}$$
where the inequality follows from the fact that $f_m$ is monotonically decreasing (see (F1)), and the first equality follows by applying the definition of $f_m$, noting that $d+k-1 \geq m$. Thus, from \eqref{funcDef}, we have $h(e) \geq \alpha_{\rm a}v_0+\beta_{\rm a}v_1$ as required.

\noindent{\bf Case (b).} Suppose that $d \geq \frac{n}{2}$ and that $d(d+k-1)<mn$. Note that when $v_1=0$, we have from \eqref{funcPiecewise} that $h(0) = v_0 = \alpha_{\rm b}v_0+\beta_{\rm b}v_1$. So we may assume that $v_1 \geq 1$. Because $d \geq \frac{n}{2} > \frac{mn}{2m+1}$, we have from our previous discussion of $h_2$ that $z<y$ and hence that $h_2$ is monotonically decreasing on the interval $[0,z]$. Because $d \geq \frac{n}{2}$, we have that $h_1$ is monotonically decreasing. Furthermore, $d(d+k-1)<mn$ implies that $v_1(d+k-1)>z$ because $v_1(d+k-1)-\frac{nv_1(d+k-m-1)}{n-d} = \frac{v_1}{n-d}(mn-d(d+k-1))>0$. Thus, from \eqref{funcPiecewise}, it follows that $h(e) \geq h_1(v_1(d+k-1))$ because $e \leq v_1(d+k-1)$. Applying the definition of $h_1$ and simplifying, we have
$$h(e) \geq h_1(v_1(d+k-1)) = v_0+v_1\left(1-\mfrac{d(d+k-1)}{mn}\right)=\alpha_{\rm b}v_0+\beta_{\rm b}v_1.$$

\noindent{\bf Case (c).} Suppose that $d < \frac{n}{2}$. Note that when $v_1=0$ or $d=0$, we have from \eqref{funcPiecewise}, that $h(0) = v_0 = \alpha_{\rm b}v_0+\beta_{\rm b}v_1$. So we may assume that $v_1,d \geq 1$. Because  $d < \frac{n}{2}$, $h_1$ is monotonically increasing. Thus, the global minimum of $h$ is at least the minimum of $h_2$ on the interval $[0,z]$, and it follows from \eqref{funcPiecewise} and our previous discussion of $h_2$ that this minimum is at least $h_2(y)$. Thus we have
$$h(e) \geq h_2(y)=v_0+v_1\left(\sqrt{\mfrac{d(m+1)}{m(n-d)}}-\mfrac{d(d+k)}{2m(n-d)}\right)=\alpha_{\rm c}v_0+\beta_{\rm c}v_1.$$
\end{proof}

Note that in the special case where $d=0$, the bound of Theorem \ref{d<rTrickyCoveringTheorem}(a) will usually be the strongest of our bounds. We now give examples of infinite families of parameter sets for which the bounds given by Theorem \ref{d<rTrickyCoveringTheorem} yield arbitrarily large improvements on the bound of Theorem \ref{NeatCoveringTheorem}. Let $\lambda$ be constant and $k \rightarrow \infty$. For $i \in \{{\rm a},{\rm b},{\rm c}\}$, let $\alpha_i$ and $\beta_i$ be as defined in the proof of Theorems \ref{d<rTrickyCoveringTheorem} and \ref{d<rTrickyPackingTheorem} and observe that
$$CB_{(v,k,\lambda)}(\alpha_i,\beta_i)-CB_{(v,k,\lambda)}(1,0)
= \mfrac{v(k\beta_i+r(\alpha_i-\beta_i-1)+\alpha_i-1)}{(k(\alpha_i-\beta_i)+1)(k+1)},$$
noting that $\beta_i>0$. This last expression is
\begin{equation}
\Theta(\tfrac{r}{k})(k\beta_i+r(\alpha_i-\beta_i-1)) - O(1)
\end{equation}
using the facts that $v = \Theta(kr)$ and that, for each $i \in \{{\rm a},{\rm b},{\rm c}\}$, $\alpha_i\leq 1$ and $\alpha_i-\beta_i = \Theta(1)$ because $\alpha_i > \frac{1}{2}$ and $\alpha_i \geq 2\beta_i$. Let $\Delta=k\beta_i+r(\alpha_i-\beta_i-1)$. If $\frac{r\Delta}{k} \rightarrow \infty$, then $CB_{(v,k,\lambda)}(\alpha_i,\beta_i)$ will become arbitrarily larger than the bound of Theorem \ref{NeatCoveringTheorem}.
\begin{itemize}
    \item
When $(\alpha_i,\beta_i)=(\alpha_{\rm a},\beta_{\rm a})$,
$$\Delta =\mfrac{(k-r)n(n+1)(n+2)-rd^2(d+k)}{\Theta(r^2k)}.$$
Thus, when $k-r = \Theta(k)$, $k = o(r^2)$ and $d = o(r)$ the bound of Theorem \ref{d<rTrickyCoveringTheorem}(a) yields arbitrarily large improvements on the bound of Theorem \ref{NeatCoveringTheorem}.
    \item
When $(\alpha_i,\beta_i)=(\alpha_{\rm b},\beta_{\rm b})$,
$$\Delta=(k-r)\left(1-\mfrac{d(d+k-1)}{n(n+1)}\right).$$
So when $r=\Theta(k)$, $k-r = \Theta(k)$ and $\frac{n}{2} \leq d \leq \frac{1}{2}(\sqrt{(k-1)^2+(4-\epsilon)n^2}-(k-1))$ for some positive constant $\epsilon$, we have $1-\frac{d(d+k-1)}{n(n+1)} = \Theta(1)$ and hence Theorem \ref{d<rTrickyCoveringTheorem}(b) yields arbitrarily large improvements on the bound of Theorem \ref{NeatCoveringTheorem}.
    \item
When $(\alpha_i,\beta_i)=(\alpha_{\rm c},\beta_{\rm c})$,
$$\Delta= \mfrac{(k-r)\sqrt{d}}{\Theta(r^2)}\sqrt{4(n+1)(n+2)(n-d)-d(d+k)^2}.$$
Observe that when $d<\min(\frac{n}{2},2n-k)$, we have
$$4(n+1)(n+2)(n-d)-d(d+k)^2 > \tfrac{n}{2}(4(n+1)(n+2)-(d+k)^2) = \Theta(r^2)$$
where the inequality follows because $d<\frac{n}{2}$ implies $(n-d)>\frac{n}{2}$, and the equality follows from $d < 2n-k$. So when $k-r = \Theta(k)$, $d<\min(\frac{n}{2},2n-k)$ and  $d \rightarrow \infty$, Theorem \ref{d<rTrickyCoveringTheorem}(c) yields arbitrarily large improvements on the bound of Theorem \ref{NeatCoveringTheorem}.
\end{itemize}
Table \ref{d<rTrickyCoveringImprovementsTable} gives examples of parameter sets for which one of the bounds of Theorem \ref{d<rTrickyCoveringTheorem} strictly improves on Theorem \ref{NeatCoveringTheorem}.

\begin{table}[H]
\begin{small}
\begin{center}
\begin{tabular}{|c|p{15cm}|}
\hline
$k$ & $v$ \\ \hline
$6$ & $21$ \\
$7$ & $24$, $25$, $30^{\rm c}$ \\
$8$ & $27$, $36$, $41$, $42$, $43$, $48^{\rm c}$ \\
$9$ & $32$, $33$, $40$, $41$, $47$, $48$, $49$, $55$, $56$, $57$, $62^{\rm c}$, $63^{\rm c}$, $64$, $65$ \\
$10$ & $35$, $37$, $44$, $45$, $53$, $54$, $55$, $63$, $64$, $72$, $73$, $77^{\rm b}$, $78^{\rm b}$, $79^{\rm c}$, $80^{\rm c}$, $81$, $82$ \\
$11$ & $40$, $50$, $51$, $60$, $61$, $69$, $70$, $71$, $78^{\rm c}$, $79^{\rm c}$, $80$, $81$, $90$, $91$, $96^{\rm b}$, $97^{\rm c}$, $98^{\rm c}$, $99^{\rm c}$, $100$, $101$ \\
$12$ & $44$, $54$, $55$, $56$, $65$, $66$, $67$, $76$, $77$, $78_{2}$, $86^{\rm c}$, $87^{\rm c}$, $88$, $89$, $96^{\rm b}$, $97^{\rm c}$, $98$, $99$, $100_{2}$, $107^{\rm c}$, $108^{\rm c}$, $111$, $117^{\rm b}$, $118^{\rm c}$, $119^{\rm c}$, $120$, $121$, $122$ \\
$13$ & $47$, $49$, $60$, $70$, $71$, $72$, $73$, $82$, $83$, $84$, $85$, $95^{\rm c}$, $96$, $97$, $105$, $106$, $107$, $108$, $109_{2}$, $117^{\rm c}$, $118$, $119$, $120$, $121$, $128^{\rm b}$, $129^{\rm c}$, $130^{\rm c}$, $132$, $133_{2}$, $140^{\rm c}$, $141^{\rm c}$, $142^{\rm c}$, $143$, $144$, $145$ \\ \hline

\end{tabular}
\end{center}
\end{small}
\vspace{-0.6cm}

\caption{For $\lambda=1$ and each $k\in \{3,\ldots,13\}$, the values of $v > \frac{13}{4}k$ for which one of the bounds of Theorem \ref{d<rTrickyCoveringTheorem} strictly improves on Theorem \ref{NeatCoveringTheorem}. Values of $v$ for which the bound of Theorem \ref{NeatCoveringTheorem} is improved by $i \geq 2$ are marked with a subscript $i$ and values of $v$ for which the bound of Theorem \ref{d<rTrickyCoveringTheorem}(b) or Theorem \ref{d<rTrickyCoveringTheorem}(c) is strictly greater than the bound of Theorem \ref{d<rTrickyCoveringTheorem}(a) are marked with a superscript b or c respectively.}
\label{d<rTrickyCoveringImprovementsTable}
\end{table}

\section{Some exact covering numbers}

This paper has focussed on establishing new lower bounds on covering numbers, but we conclude by
showing that for some parameter sets our new bounds are tight and yield exact covering numbers. For our purposes, an \emph{affine plane of order $q$} is a $(q^2,q,1)$-design. It is well known that an affine plane of order $q$  exists whenever $q$ is a prime power. The following result is based on a simple method for obtaining coverings from affine planes used in \cite{TodFP}.

\begin{lemma}\label{AffCovLemma}
Let $q$ be an integer such that an affine plane of order $q$ exists and let $s$ be a positive
integer, then $C(sq^2,sq) \leq q^2+q$.
\end{lemma}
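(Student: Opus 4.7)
The plan is to build the desired covering explicitly by ``blowing up'' each point of an affine plane of order $q$ into $s$ points. Concretely, I would fix an affine plane $(P,\mathcal{L})$ of order $q$ on point set $P=\{p_1,\ldots,p_{q^2}\}$, take the new point set to be $V=P\times[s]$ (so that $|V|=sq^2$), and for each line $L\in\mathcal{L}$ define a new block $B_L=L\times[s]$, consisting of the $sq$ points obtained by replacing each of the $q$ points on $L$ by its whole ``fibre'' of size $s$.

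Next I would verify the three required properties of $\mathcal{B}=\{B_L:L\in\mathcal{L}\}$. First, each $B_L$ has cardinality $qs$, matching the prescribed block size. Second, an affine plane of order $q$ has exactly $q(q+1)$ lines (for instance, $q+1$ parallel classes of $q$ lines each), so $|\mathcal{B}|=q^2+q$. Third, I need every pair of points in $V$ to lie together in at least one block. I would split this into two cases: a pair $\{(p_i,\alpha),(p_i,\beta)\}$ lying in a single fibre is contained in $B_L$ for every line $L$ through $p_i$, hence in $q+1$ blocks; a pair $\{(p_i,\alpha),(p_j,\beta)\}$ with $p_i\neq p_j$ is contained in $B_L$ precisely when $L$ is the unique line of the affine plane through $p_i$ and $p_j$, so it is covered exactly once. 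Either way the pair is covered at least once.

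Combining these observations, $(V,\mathcal{B})$ is a $(sq^2,sq,1)$-covering with $q^2+q$ blocks, which yields $C(sq^2,sq)\leq q^2+q$. There is no real obstacle here; the only thing to be careful about is the straightforward bookkeeping that $|\mathcal{B}|=q(q+1)$ and that same-fibre pairs are still covered (since they correspond to a ``repeated'' point of the affine plane, one must check that the $q+1$ lines through that point still produce blocks containing both copies, which is immediate from the definition $B_L=L\times[s]$).
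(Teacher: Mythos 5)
Your construction is exactly the one used in the paper: blow up each point of an affine plane of order $q$ into a fibre of size $s$ and take the blocks $L\times[s]$, giving a $(sq^2,sq,1)$-covering with $q^2+q$ blocks. Your verification of the pair-coverage conditions is correct (indeed, slightly more detailed than the paper's), so the proof is fine and matches the paper's argument.
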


\begin{proof}
Let $(U,\mathcal{A})$ be an affine plane of order $q$. Obviously $\mathcal{A}$ has $q^2+q$ blocks. Let $V = U \times [s]$ and let $\mathcal{B}=\{A \times [s]: A \in \mathcal{A}\}$. Then $(V,\mathcal{B})$ is an $(sq^2,sq,1)$-covering with $q^2+q$ blocks.
\end{proof}

Let $q$ be the order of an affine plane and let $s$ be a positive integer. By Lemma \ref{AffCovLemma}, $C(v,sq) \leq q^2+q$ for any $v \leq sq^2$. For parameter sets $(v,sq,1)$ where $sq^2-q+2 \leq v \leq sq^2$ and $s \geq q-1$, the Sch{\"o}nheim bound is $q^2+q$ and so we have $C(v,sq) = q^2+q$.  For parameter sets $(v,sq,1)$ where $v \leq sq^2-q+1$, however, the Sch{\"o}nheim bound is at most $q^2$. In the following result we show that Theorem \ref{NeatCoveringTheorem} and Theorem \ref{d>rCoveringTheorem}(a) allow us to conclude that $C(v,sq) = q^2+q$ for some parameter sets $(v,sq,1)$ where $v \leq sq^2-q+1$.

\begin{theorem}\label{ExactCN}
Let $q \geq 2$ be an integer such that an affine plane of order $q$ exists and let $s$ be an
integer such that $s \geq 2q+1$. Then $C(v,sq) = q^2+q$ for each integer $v$ such that
$sq^2-q+1-z < v \leq sq^2$, where
$$z=\left\{
      \begin{array}{ll}
        \min\left(q-1,\mfrac{q(s-2q-1)+2}{q+1}\right), & \hbox{if $2q+1 \leq s \leq 4q+1$;} \\[0.3cm]
        \mfrac{q^2(s-q-2)-q+2}{3q^2+3q-2}, & \hbox{if $s \geq 4q+2$.}
      \end{array}
    \right.
$$
\end{theorem}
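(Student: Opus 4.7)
The upper bound $C(v,sq) \leq q^2+q$ follows from Lemma \ref{AffCovLemma} together with the monotonicity $C(v,sq) \leq C(sq^2,sq)$ for $v \leq sq^2$. For the lower bound I would split the range of $v$ according to $r = \lceil(v-1)/(sq-1)\rceil$. When $sq^2-q+2 \leq v \leq sq^2$, one has $r = q+1$ and the Sch\"onheim bound $\lceil vr/(sq)\rceil$ already equals $q^2+q$ (exactly as in the discussion immediately preceding the theorem, which requires only $s \geq q-1$; this is automatic from $s \geq 2q+1$). So the remaining work is to handle $v \leq sq^2-q+1$, where a direct computation gives $r=q$, $d = sq^2-q+1-v$, and $n = r-\lambda = q-1$.

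When $d < q-1$, Theorem \ref{NeatCoveringTheorem} applies and gives $C(v,sq) \geq \lceil v(q+1)/(sq+1)\rceil$. This is at least $q^2+q$ precisely when $v(q+1) > (q^2+q-1)(sq+1)$; substituting $v = sq^2-q+1-d$ turns this into the linear condition $d < (q(s-2q-1)+2)/(q+1)$. Intersecting with the hypothesis $d < q-1$ yields the Case~1 formula $z = \min(q-1,\, (q(s-2q-1)+2)/(q+1))$. The assumption $s \geq 2q+1$ is exactly what guarantees the right-hand side of this minimum is positive, so that at least $d=0$ (i.e.\ $v = sq^2-q+1$) is covered; for $s \leq 4q+1$ the min is below $q-1$ and the first expression controls.

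When $s \geq 4q+2$ the range must be extended past $d = q-2$ by invoking Theorem \ref{d>rCoveringTheorem}(a) with $\alpha = q/(2d+2)$ and $\beta = q/(2(d+sq))$ in the case $d \geq q-1$. Substituting these into $CB_{(v,sq,1)}(\alpha,\beta)$ and demanding $\lceil CB\rceil \geq q^2+q$ (equivalently $CB > q^2+q-1$) reduces, after clearing the common denominator $2(d+1)(d+sq)$, to a quadratic inequality in $d$; simplification yields the Case~2 formula $z = (q^2(s-q-2)-q+2)/(3q^2+3q-2)$. The switch-over threshold $s = 4q+2$ is precisely where this Case~2 bound begins to exceed the value $q-1$ available from Case~1, so the two pieces of the piecewise definition of $z$ meet sensibly. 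The principal obstacle is the algebraic reduction of the $CB$ inequality to this clean closed form; the remaining verifications (that $r=q$ in the claimed range, that $r<k$, and that the hypotheses of the invoked theorems hold) are routine.
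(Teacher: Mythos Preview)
Your plan matches the paper's proof closely: upper bound via Lemma~\ref{AffCovLemma} and monotonicity, then Theorem~\ref{NeatCoveringTheorem} for $d<q-1$ and Theorem~\ref{d>rCoveringTheorem} for $d\geq q-1$. Two small points: the paper streamlines by checking the lower bound only at the single smallest $v'=sq^2-q+1-y$ (monotonicity does the rest), and Theorem~\ref{d>rCoveringTheorem} has no part~(a).

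The one substantive discrepancy is in Case~2. You propose substituting the full pair $(\alpha,\beta)=\bigl(\tfrac{q}{2d+2},\tfrac{q}{2(d+sq)}\bigr)$ into $CB_{(v,sq,1)}(\alpha,\beta)$ and claim the resulting quadratic in $d$ simplifies to the stated threshold $z=\tfrac{q^2(s-q-2)-q+2}{3q^2+3q-2}$. It does not. The paper instead invokes the observation (from the discussion after Theorem~\ref{d>rPackingTheorem}) that $CB_{(v,k,\lambda)}(\alpha,\beta)$ is increasing in $\beta$ when $k\alpha>r$, and hence replaces $\beta$ by $0$. This yields
\[
CB_{(v',sq,1)}\!\left(\tfrac{q}{2d+2},0\right)=\frac{q(q+1)(sq^2-q+1-d)}{sq^2+2d+2},
\]
and the condition that this exceed $q^2+q-1$ is the \emph{linear} inequality $d(3q^2+3q-2)<q^2(s-q-2)-q+2$, which is exactly the stated $z$. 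Your full-$\beta$ computation would give a strictly larger threshold (since $\beta>0$ and $k\alpha>r$ here), so your approach still proves the theorem, but the algebra will not collapse to the stated closed form; you would be proving more than is claimed, via a messier calculation. If you want the formula in the statement to drop out cleanly, take $\beta=0$ as the paper does.
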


\begin{proof}
Let $y$ be the largest integer less than $z$ and let $v'=sq^2-q+1-y$. It suffices to show that $C(v',sq) \geq q^2+q$, because then, for each integer $v$ such that $v' \leq v \leq sq^2$, we have
$$q^2+q \leq C(v',sq) \leq C(v,sq) \leq C(sq^2,sq) \leq q^2+q,$$
where the final inequality follows from Lemma \ref{AffCovLemma}.


\noindent{\bf Case 1.} Suppose that $2q+1 \leq s \leq 4q+1$. Then $z=\min(q-1,\frac{q(s-2q-1)+2}{q+1})$ and it follows from $s \geq 2q+1$ that $z>0$. Observe that $v'-1=q(sq-1)-y$ and $0 \leq y < q-1$. So, applying Theorem \ref{NeatCoveringTheorem} with $r=q$ and $d=y$, we have that $C(v',sq) \geq \lceil CB_{(v',sq,1)}(1,0)\rceil$ where
$$CB_{(v',sq,1)}(1,0)=\mfrac{(q+1)(sq^2-q+1-y)}{sq+1}.$$
Routine calculation shows that $CB_{(v',sq,1)}(1,0) > q^2+q-1$ if and only if
$y<\frac{q(s-2q-1)+2}{q+1}$.
Thus, by the definition of $v'$, we have $C(v',sq) \geq q^2+q$.

\noindent{\bf Case 2.} Suppose that $s \geq 4q+2$. Then $z=\frac{q^2(s-q-2)-q+2}{3q^2+3q-2}$ and it follows from $s \geq 4q+2$ that $z>q-1$. Observe that $v'-1=q(sq-1)-y$ and
$q-1 \leq y < sq-1$ (note that $y<z$ and it is easy to verify that $z \leq sq-1$). So, applying Theorem \ref{d>rCoveringTheorem} with $r=q$ and $d=y$, we have that $C(v',sq) \geq \lceil CB_{(v',sq,1)}(\tfrac{q}{2y+2},\tfrac{q}{2(y+sq)})\rceil$. Thus, by the discussion following Theorems \ref{d>rCoveringTheorem} and \ref{d>rPackingTheorem},
$$C(v',sq) \geq CB_{(v',sq,1)}(\tfrac{q}{2y+2},0)=\mfrac{q(q+1)(sq^2-q+1-y)}{sq^2+2y+2}.$$
Routine calculation shows that $CB_{(v',sq,1)}(\tfrac{q}{2y+2},0) > q^2+q-1$ if and only if $y<\frac{q^2(s-q-2)-q+2}{3q^2+3q-2}$. Thus, by the definition of $v'$, we have $C(v',sq) \geq q^2+q$.
\end{proof}

The results given by Theorem \ref{ExactCN} when $q=2$ or $q=3$ are already established, because exact covering numbers are known for parameter sets $(v,k,1)$ with $v \leq \frac{13}{4}k$ \cite{GreLiVan,Mil}. In Theorem \ref{ExactCN}, $z = \Theta(s)$ as $q \rightarrow \infty$ if $s \geq 4q+2$. This constitutes an improvement on a result of Todorov (see \cite[Corollary 4.5]{TodLB}). At the expense of more complication, a stronger result could be obtained by not making the simplification $C(v',sq) \geq  CB_{(v',sq,1)}(\tfrac{q}{2y+2},0)$ and by also employing Theorem \ref{d<rTrickyCoveringTheorem}(a). Table \ref{ExactCNTable} gives examples of parameter sets for which Theorem \ref{ExactCN} establishes exact covering numbers.

\begin{table}[H]
\begin{small}
\begin{center}
\begin{tabular}{|c|c|c||c|c|c||c|c|c||c|c|c|}
\hline
$k$ & $q$ & $v$ & $k$ & $q$ & $v$ & $k$ & $q$ & $v$ & $k$ & $q$ & $v$ \\
  \hline
$36$ & $4$ & $141$ &               $75$ & $5$ & $368,\ldots,371$ &         $105$ & $5$ & $518,\ldots,521$ &     $128$ & $4$ & $502,\ldots,509$ \\
$40$ & $4$ & $156, 157$ &          $76$ & $4$ & $298,\ldots,301$ &               & $7$ & $729$ &                $130$ & $5$ & $641,\ldots,646$ \\
$44$ & $4$ & $172, 173$ &          $80$ & $4$ & $314,\ldots,317$ &         $108$ & $4$ & $424,\ldots,429$ &     $132$ & $4$ & $518,\ldots,525$ \\
$48$ & $4$ & $187,\ldots,189$ &         & $5$ & $393,\ldots,396$ &         $110$ & $5$ & $542,\ldots,546$ &     $133$ & $7$ & $922,\ldots,925$ \\
$52$ & $4$ & $203,\ldots,205$ &    $84$ & $4$ & $329,\ldots,333$ &         $112$ & $4$ & $439,\ldots,445$ &     $135$ & $5$ & $666,\ldots,671$ \\
$55$ & $5$ & $271$ &               $85$ & $5$ & $418,\ldots,421$ &               & $7$ & $777, 778$ &           $136$ & $4$ & $534,\ldots,541$ \\
$56$ & $4$ & $219,\ldots,221$ &    $88$ & $4$ & $345,\ldots,349$ &         $115$ & $5$ & $567,\ldots,571$ &           & $8$ & $1081$ \\
$60$ & $4$ & $235,\ldots,237$ &    $90$ & $5$ & $443,\ldots,446$ &         $116$ & $4$ & $455,\ldots,461$ &     $140$ & $4$ & $550,\ldots,557$ \\
     & $5$ & $295, 296$ &          $92$ & $4$ & $361,\ldots,365$ &         $119$ & $7$ & $826, 827$ &                 & $5$ & $691,\ldots,696$ \\
$64$ & $4$ & $251,\ldots,253$ &    $95$ & $5$ & $468,\ldots,471$ &         $120$ & $4$ & $471,\ldots,477$ &           & $7$ & $970,\ldots,974$ \\
$65$ & $5$ & $320, 321$ &          $96$ & $4$ & $377,\ldots,381$ &               & $5$ & $592,\ldots,596$ &     $144$ & $4$ & $565,\ldots,573$ \\
$68$ & $4$ & $267,\ldots,269$ &    $100$ & $4$ & $392,\ldots,397$ &        $124$ & $4$ & $487,\ldots,493$ &           & $8$ & $1144, 1145$ \\
$70$ & $5$ & $344,\ldots,346$ &          & $5$ & $493,\ldots,496$ &        $125$ & $5$ & $616,\ldots,621$ &     $145$ & $5$ & $715,\ldots,721$ \\
$72$ & $4$ & $282,\ldots,285$ &    $104$ & $4$ & $408,\ldots,413$ &        $126$ & $7$ & $874,\ldots,876$ &     $147$ & $7$ & $1018,\ldots,1023$ \\ \hline
\end{tabular}
\end{center}
\end{small}
\vspace{-0.6cm}

\caption{For each $k\in \{3,\ldots,147\}$ and each choice of $q \geq 4$, the values of $v$ for which Theorem \ref{ExactCN} establishes that $C(v,k,1)=q^2+q$, excluding those for which this is implied by the Sch{\"o}nheim bound.}
\label{ExactCNTable}
\end{table}

\section{Conclusion}

It is worth noting that, via Lemma \ref{MainLemma}, improved bounds on the size of $m$-independent sets in multigraphs immediately translate to improved bounds on packing and covering numbers. The techniques employed in this paper may also produce useful results when applied to coverings and packings with blocks of various sizes. As mentioned in the discussion following it, Lemma~\ref{DominanceCor} need not require strict inequality in every row of the matrix. This raises the possibility of obtaining stronger results on coverings and packings in the special case where $d=r-\lambda$. There is also the potential to find further examples of coverings and packings meeting the new bounds and hence to exactly determine more covering and packing numbers. More speculatively, there is the possibility of attempting to obtain similar results for $t$-$(v,k,\lambda)$-coverings and packings for $t \geq 3$.

\vspace{0.3cm} \noindent{\bf Acknowledgements}

The author was supported by Australian Research Council grants DE120100040, DP120103067 and DP150100506.


\begin{thebibliography}{99}
\def\baselinestretch{1}\small\normalsize
\begin{small}

    \bibitem{BluGreHei}
I. Bluskov, M. Greig and M.K. Heinrich, Infinite classes of covering numbers, {\it
Canad. Math. Bull.} {\bf 43} (2000), 385--396.

    \bibitem{Bos}
R.C. Bose, A Note on Fisher's Inequality for Balanced Incomplete Block Designs, {\it Ann. Math.
Statistics} {\bf 20} (1949), 619–-620.

    \bibitem{BosCon}
R.C. Bose and W.S. Connor, Combinatorial properties of group divisible incomplete block designs,
{\it Ann. Math. Stat.} {\bf 23} (1952), 367--383.

    \bibitem{BryBucHorMaeSch}
D. Bryant, M. Buchanan, D. Horsley, B. Maenhaut and V. Scharaschkin, On the non-existence of pair
covering designs with at least as many points as blocks, {\it Combinatorica} {\bf 31} (2011),
507--528.

    \bibitem{CarTuz}
Y. Caro and Z. Tuza, Improved lower bounds on $k$-independence, {\it J. Graph Theory} {\bf 15} (1991), 99--107.

    \bibitem{CarYus97}
Y. Caro and R. Yuster, Packing graphs: the packing problem solved, {\it Electron. J. Combin.} {\bf 4} (1997), no 1, R1 7pp.

    \bibitem{CarYus98}
Y. Caro and R. Yuster, Covering graphs: the covering problem solved, {\it J. Combin. Theory Ser. A} {\bf 83} (1998), 273--282.

    \bibitem{CheColLinWil}
Y.M. Chee, C.J. Colbourn, A.C.H. Ling and R.M. Wilson, Covering and packing for pairs, {\it J.
Combin. Theory Ser. A} {\bf 120} (2013), 1440--1449.

    \bibitem{ErdHan}
P. Erd{\H{o}}s and H. Hanani, On a limit theorem in combinatorial analysis, {\it Publ. Math.
Debrecen} {\bf 10} (1963), 10--13.

    \bibitem{ErdRen}
P. Erd{\H{o}}s and A. R{\'e}nyi, On some combinatorical problems, {\it Publ. Math.
Debrecen} {\bf 4} (1956), 398--405.

    \bibitem{Fis}
R.A. Fisher, An examination of the different possible solutions of a problem in incomplete blocks,
{\it Ann. Eugenics} {\bf 10} (1940), 52--75.

    \bibitem{Fur}
Z. F{\"u}redi, Covering pairs by {$q^2+q+1$} sets, {\it J. Combin. Theory Ser. A} {\bf 54} (1990), 248--271.

    \bibitem{Gor}
D.M. Gordon,
La Jolla Covering Repository,
{\tt http://www.ccrwest.org/cover.html}.

    \bibitem{GorSti}
D.M. Gordon and D.R. Stinson, Coverings, in {\it The CRC Handbook of Combinatorial Designs, {\rm 2}nd edition} (Eds. C.J. Colbourn, J.H. Dinitz), CRC Press (2007), 365--373.

    \bibitem{GreLiVan}
M. Greig, P.C. Li and G.H.J. van Rees, Covering designs on 13 blocks revisited, {\it Util. Math.} {\bf 70} (2006), 221--261.

    \bibitem{Joh}
S.M. Johnson, A new upper bound for error-correcting codes, {\it IRE Trans.} {\bf IT-8} (1962), 203--207.

    \bibitem{Kee}
P. Keevash, The existence of designs, arXiv:1401.3665.


    \bibitem{Mil}
W.H. Mills, Covering designs. I. Coverings by a small number of subsets, {\it Ars Combin.} {\bf 8} (1979), 199--315.

    \bibitem{MilMul}
W.H. Mills and R.C. Mullin, Coverings and packings, in {\it Contemporary Design Theory}, (Eds. J.H. Dinitz and D.R. Stinson), Wiley, (1992), 371--399.

    \bibitem{Rod}
V. R\"{o}dl, On a packing and covering problem, {\it European J. Combin.} {\bf 6} (1985), 69--78.

    \bibitem{Sch}
J. Sch{\"o}nheim, On coverings, {\it Pacific J. Math.} {\bf 14} (1964), 1405--1411.

    \bibitem{StiWeiYin}
D.R. Stinson, R. Wei and J. Yin, Packings, in {\it The CRC Handbook of Combinatorial Designs, {\rm 2}nd edition} (Eds. C.J. Colbourn, J.H. Dinitz), CRC Press (2007), 550--556.

    \bibitem{Tau}
O. Taussky, A recurring theorem on determinants, {\it Amer. Math. Monthly} {\bf 56} (1949),
672--676.

    \bibitem{TodFP}
D.T. Todorov, Some coverings derived from finite planes, {\it Colloq. Math. Soc. J\'anos Bolyai}
{\bf 37} (1984), 697--710.

    \bibitem{TodLB}
D.T. Todorov, Lower bounds for coverings of pairs by large blocks, {\it Combinatorica} {\bf 9}
(1989), 217--225.





\end{small}
\end{thebibliography}
\end{document}